\theoremstyle{plain}
\newtheorem{thm}{Theorem}[section]
\newtheorem{prop}[thm]{Proposition}
\newtheorem{cor}[thm]{Corollary}
\newtheorem{lem}[thm]{Lemma}
\theoremstyle{definition}
\newtheorem{ques}[thm]{Question}
\theoremstyle{remark}
\newtheorem{rmk}{Remark}[section]
\newtheorem*{ac}{Acknowledgements}
\newcommand{\To}{\Longrightarrow}
\newcommand{\nn}{\mathbb{N}}
\newcommand{\zz}{\mathbb{Z}}
\newcommand{\rr}{\mathbb{R}}
\newcommand{\yoemph}[1]{\emph{#1}}
\newcommand{\yoomega}{\omega_{0}}
\newcommand{\yocantorc}{\Gamma}
\newcommand{\yourysp}{\mathbb{U}}
\newcommand{\yourydis}{\rho}
\DeclareMathOperator{\card}{Card}
\DeclareMathOperator{\cl}{CL}
\newcommand{\yosub}{\subseteq}
\DeclareMathOperator{\yodiam}{diam}
\newcommand{\yocmet}[2]{\mathrm{UCPM}(#1, #2)}
\newcommand{\yocpm}[1]{\mathrm{CPM}(#1)}
\newcommand{\met}[1]{\mathrm{Met}(#1)}
\DeclareMathOperator{\metdis}{\mathcal{D}}
\DeclareMathOperator{\umetdis}{\mathcal{UD}}
\newcommand{\yochara}{\mathscr}
\newcommand{\oumetbd}[1]{\mathrm{BMet}(#1)}
\newcommand{\oucpmbd}[1]{\mathrm{BCPM}(#1)}
\newcommand{\oucube}{\mathbf{Q}}
\newcommand{\ouelltwo}{\ell^{2}}
\newcommand{\ouelltwosp}[1]{\ouelltwo(#1)}
\newcommand{\ouellinf}{\ell^{\infty}}
\newcommand{\ouellsp}[1]{\ouellinf(#1)}
\newcommand{\ouelldis}[1]{\lVert #1 \lVert_{\ouellinf}}
\newcommand{\ouonept}{pt}
\newcommand{\ouoneptsp}{\{\ouonept\}}
\newcommand{\yomainmap}{\mathbb{E}}
\newcommand{\yomainmapf}{\mathbb{F}}
\newcommand{\yoextmapb}{\mathbb{B}}
\newcommand{\yoextmap}{\mathbb{E}}
\newcommand{\yoirrsp}[1]{B(#1)}
\newcommand{\oucsep}{\yochara{S}}
\newcommand{\oucwei}[1]{\oucsep_{#1}}
\newcommand{\oucbdd}[1]{\mathrm{B}(#1)}
\newcommand{\oucfuncsp}[1]{C_{b}(#1)}
\newcommand{\oucfuncdis}[1]{\lVert #1 \rVert}
\newcommand{\oucosp}[1]{c_{0}(#1)}
\newcommand{\yooone}{\omega_{1}}
\begin{document}

\title[Isometric universality]{
On isometric universality of   spaces of metrics}

\author{Yoshito Ishiki \and Katsuhisa Koshino}

\address[Yoshito Ishiki]{
Department of Mathematical Sciences
\endgraf
Tokyo Metropolitan University
\endgraf
Minami-osawa Hachioji Tokyo 192-0397
\endgraf
Japan}

\email{ishiki-yoshito@tmu.ac.jp}

\address[Katsuhisa Koshino]{
Faculty of Engineering
\endgraf
Kanagawa University
\endgraf Yokohama, 221-8686
\endgraf Japan}

\email{ft160229no@kanagawa-u.ac.jp}

\subjclass[2020]{Primary 54C35; Secondary 54E35}
\keywords{spaces of metrics, sup-metric, universality, isometric embeddings}


\date{\today}

\begin{abstract}
A metric space $(M, d)$ is said to be universal for
a class of metric spaces if all  metric spaces in the class  can be isometrically embedded into $(M, d)$.  In this paper,  for a metrizable space   $Z$ possessing  abundant subspaces, we first  prove that the space of bounded metrics on $Z$ is universal for all bounded metric spaces (with restricted cardinality).  Next, in contrast,  we show that if $Z$ is an infinite  discrete space,  then the space of metrics on $Z$  is   universal for  all separable metric spaces. As a corollary of our results,  if $Z$ is non-compact, or uncountable and compact,   then the space of metrics on $Z$ is   universal for all compact  metric spaces.  In addition, if $Z$ is compact and countable, then there exists a compact metric space that can not be isometrically embedded into the space of metrics on $Z$.  
\end{abstract}

\maketitle

\section{Intoroduction}\label{sec:intro}

\subsection{Backgrounds}

For a metrizable space 
$X$, 
let 
$\yocpm{X}$
be 
the set of all continuous 
pseudometrics on $X$.  
The continuity of a
 pseudometric 
 $d$ 
 on 
 a set 
 $X$ 
 means 
that 
$d$ 
is continuous as a map 
from 
$X\times X$ 
to 
$[0, \infty)$. 
We  denote by 
$\met{X}$
the set of 
metrics on  $X$ that generate the 
same topology of $X$. 
For $d, e\in \yocpm{X}$, 
we define a metric $\metdis_{X}$ by 
\[
\metdis_{X}(d, e)=\sup_{x, y\in X}|d(x, y)-e(x, y)|. 
\]
We represent 
the restricted metric 
$\metdis_{X}|_{\met{X}^{2}}$ 
as the original symbol 
$\metdis_{X}$.
We also denote by 
$\oumetbd{X}$
(resp.~$\oucpmbd{X}$)
the set of all bounded metrics in 
$\met{X}$
(resp.~all bounded pseudometrics in 
$\yocpm{X}$). 
Remark that
if $X$ is compact, 
then 
$\met{X}=\oumetbd{X}$.

The first author 
introduced 
the concept 
of 
$\met{X}$
as a moduli space of 
``geometries''
 on 
$X$, 
and clarified 
the denseness 
and Borel hierarchy 
of 
a subset 
$\{\, d\in \met{X}\mid \text{$(X, d)$ satisfies $\mathcal{P}$}\, \}$
in 
$\met{X}$
for
a certain property 
$\mathcal{P}$ on 
metrics spaces. 
For more information, 
see \cite{Ishiki2020int}, 
\cite{Ishiki2021ultra}, 
\cite{Ishiki2021dense}, 
\cite{MR4527953}, 
\cite{Ishiki2023disco},
and 
\cite{Ishiki2023sr}. 
The second author determined 
topological types of 
$\oumetbd{X}$ with respect to 
not only uniform topologies 
but also 
compact-open topologies
(see
 \cite{koshino2022topological}
and 
\cite{MR4586584}). 
He
\cite{koshino2024borel}
 also proved that 
$\oumetbd{X}$
 is completely metrizable 
if and only if 
$X$ is $\sigma$-compact
when $X$ is separable. 
In this paper, 
we will 
investigate 
the isometric 
 universality 
 of 
 $(\met{X}, \metdis_{X})$. 

Let 
$\yochara{C}$
be a class of 
metric spaces. 
In this setting, 
a metric space 
$(X, d)$
is 
said to be 
\yoemph{universal for $\yochara{C}$}
or 
\yoemph{$\yochara{C}$-universal} if 
every 
$(S, m)\in \yochara{C}$
can be isometrically embedded into 
$(X, d)$. 
Let 
$\oucsep$
the class of 
all separable 
metric spaces.

There are 
so many 
 results on
the universality
for some  classes of metric spaces. 
We focus  only 
on 
theorems related to 
the present paper. 
Fr\'{e}chet
established 
that 
the space 
$(\ouellinf, \ouelldis{*})$
of 
bounded functions on 
$\nn$
is universal for 
$\oucsep$, 
which is today called the 
Fr\'{e}chet 
embedding 
theorem
(see 
\cite[Proposition 1.17]{MR3114782}). 
Note that 
$\ouellinf$
 is not separable. 
As long as the authors know, 
it was Urysohn 
who first constructed 
a separable metric space 
universal for
 $\oucsep$, 
which space is called the 
Urysohn universal 
metric space
$(\yourysp, \yourydis)$
(see \cite{Ury1927}, 
\cite{MR2435148} and 
\cite{MR2277969}). 
Banach
and Mazur
proved that 
the  space
$\oucfuncsp{[0, 1]}$
of 
all 
continuous bounded functions on 
$[0, 1]$
is universal for 
$\oucsep$
(see \cite[Theorem 10 in Chapter XI]{MR0880204}
and 
\cite[Theorem 1.4.4]{MR3526021}, 
see also 
\cite{MR1328375} and \cite{MR1707147}). 
For every  subset 
$R$ of $[0, \infty)$
 with 
 $0\in R$,
and 
for
every 
ultrametrizable space
$X$, 
as a non-Archimedean analogue of 
$\yocpm{X}$, 
we can define 
the space 
$(\yocmet{X}{R}, \umetdis_{X})$
of
$R$-valued  ultrametrics equipped with 
the non-Archimedean sup-metric. 
In this paper, 
we omit the details of 
$(\yocmet{X}{R}, \umetdis_{X})$
(see 
\cite{Ishiki2021ultra}, 
\cite{Ishiki2022factor}, 
\cite{Ishiki2023const},
and  
\cite{Ishiki2023Ury}). 
In 
\cite{Ishiki2023const} 
and 
\cite{Ishiki2023Ury}, 
the first author in the present paper 
showed that, 
for every  subset 
$R$ of $[0, \infty)$
 with 
 $0\in R$,
and 
for
every 
 infinite
 compact ultrametrizable space
 $X$, 
the ultrametric space 
$(\yocmet{X}{R}, \umetdis_{X})$
of continuous 
pseudo-ultrametrics 
is isometric to 
the $R$-Urysohn universal 
ultrametric space, 
i.e., 
 a non-Archimedean analogue of 
$(\yourysp, \yourydis)$. 
Thus
$\yocmet{X}{R}$
is universal 
for 
all 
separable 
$R$-valued 
ultrametric spaces. 
Based on this 
non-Archimedean 
theorem, 
we can consider that 
the present  paper 
deals with 
an  Archimedean analogue of 
the results in 
\cite{Ishiki2023const} 
and 
\cite{Ishiki2023Ury}.

\subsection{Main results}
In this paper, 
for 
a metrizable space $Z$ 
possessing  abundant 
subspaces, 
we first  prove that 
the space of bounded 
metrics on $Z$ is 
universal for all bounded metric spaces
(with restricted cardinality). 
Next, in contrast, 
 we show that if $Z$ is an  infinite  discrete space, 
 then the space of metrics on $Z$ 
 is   universal for 
 all separable metric spaces. 
Third,  
 we investigate 
 the spaces of metrics on 
 compact and countable metrizable spaces. 
 
\subsubsection{Spaces possessing  abundant subspaces}
Throughout this paper, 
we often  use the 
set-theoretic notation of 
ordinals and cardinals.
The symbol 
$\yoomega$
stands for the first infinite 
ordinal, 
which is equal to 
$\zz_{\ge 0}$ as a set. 
The discrete  topology of 
$\yoomega$ is the same to 
the order topology on 
$\yoomega$. 
Moreover, 
for two ordinals $\alpha, \beta$, 
the relation 
$\alpha<\beta$ means that 
$\alpha \in \beta$. 
In this paper, 
we will deal with 
both  the discrete topology and 
the order topology on 
a cardinal $\kappa$. 
We will 
clarify which topology is being used
to ensure 
that no confusion can arise.

Let 
$\kappa$
be a 
cardinal. 
We denote by 
$\oucwei{\kappa}$
the class of 
all metric spaces 
of weight 
$\kappa$. 
For example, 
$\oucwei{\aleph_{0}}=\oucsep$. 
For a class 
$\yochara{C}$
of 
metric spaces, 
we denote by 
$\oucbdd{\yochara{C}}$
the all bounded metric spaces 
belonging to 
$\yochara{C}$. 
For example, 
the class 
$\oucbdd{\oucsep}$
consists of 
all separable bounded metric spaces. 

For two topological spaces $X$ and $Y$, 
we denote by 
$X\oplus Y$
the topological sum of $X$ and $Y$. 
Let $\ouoneptsp$
denote 
the one-point space. 

The next theorem is our first result. 
\begin{thm}\label{thm:univ0}
Let 
$Z$
 be a metrizable space, 
 and 
 let 
 $(X, d)$
  be a
  metric space such that:
  \begin{enumerate}[label=\textup{(U\arabic*)}, series=univ]
  \item\label{item:univ0main}
there exists 
a metric  space 
$(Y, D)$  containing  $X$ 
as a subspace 
such that  
 $D|_{X^{2}}=d$, 
and there exists 
a closed
topological  embedding from 
$Y \oplus \ouoneptsp$ 
to 
$Z$.
\end{enumerate}
Then 
there 
exists an isometric embedding 
$J\colon (X, d)\to (\met{Z}, \metdis_{Z})$. 
Moreover, if $d$ is bounded, 
we can choose $J$ so that 
$J$ is a map from $(X, d)$ into 
$\oumetbd{Z}$. 
\end{thm}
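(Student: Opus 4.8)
The plan is to reduce the construction of the isometric embedding to building a suitable family of \emph{continuous pseudometrics} on $Z$, and then to encode the target via a Fréchet-type functional anchored at the extra point supplied by \ref{item:univ0main}. Fix a closed embedding $\iota\colon Y\oplus\ouoneptsp\to Z$ as in \ref{item:univ0main}, write $A=\iota(Y)$ for the resulting closed copy of $Y$ and $p=\iota(\ouonept)$ for the image of the extra point. Since $Y$ and $\ouonept$ are clopen in $Y\oplus\ouoneptsp$, the set $A$ is closed in $Z$ and $p$ is \emph{separated} from $A$, i.e.\ there is an open $V\ni p$ with $\overline V\cap A=\emptyset$; transport $D$ to $A$ and, by the classical metric extension theorem of Hausdorff, fix a compatible metric $\eta$ on $Z$ with $\eta|_{A^{2}}=D$. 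The first, purely formal, step is the observation that for any fixed compatible metric $e$ on $Z$ and any continuous pseudometric $\sigma$ on $Z$, the sum $e+\sigma$ lies in $\met{Z}$, and for two such pseudometrics $\sigma,\sigma'$,
\[
\metdis_{Z}(e+\sigma,\,e+\sigma')=\sup_{z,z'\in Z}\lvert\sigma(z,z')-\sigma'(z,z')\rvert .
\]
Hence it suffices to produce continuous pseudometrics $\sigma_{x}$ ($x\in X$) with $\sup_{z,z'}\lvert\sigma_{x}(z,z')-\sigma_{x'}(z,z')\rvert=d(x,x')$ and then set $J(x):=e+\sigma_{x}$. This reduction frees us from ever checking the triangle inequality, positive-definiteness, or compatibility of the candidate metrics $J(x)$, which become automatic.

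For the encoding I would use the Fréchet identity $d(x,x')=\sup_{y\in Y}\lvert D(x,y)-D(x',y)\rvert$, the supremum being attained at $y\in\{x,x'\}\subseteq X$. The idea is to realize each coordinate $D(x,y)$ as the $\sigma_{x}$-distance from the anchor $p$ to the point $\iota(y)\in A$, while keeping $\sigma_{x}$ equal to the fixed $D$ on $A$ and independent of $x$ on all pairs unrelated to $p$. On the closed set $W=A\cup\{p\}$ I would prescribe the cone pseudometric $\mu_{x}$ given by $\mu_{x}(a,a')=D$ on $A$ and $\mu_{x}(p,a)=C+D\!\bigl(x,\iota^{-1}(a)\bigr)$ for a fixed constant $C>0$; a short check shows $\mu_{x}$ is a metric on $W$ and that $\sup_{W^{2}}\lvert\mu_{x}-\mu_{x'}\rvert=d(x,x')$, attained on the pair $(p,\iota(x))$. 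The task then becomes to extend $\mu_{x}$ to a continuous pseudometric $\sigma_{x}$ on all of $Z$ that is \emph{simultaneously} exact on $W$ (giving the lower bound $\metdis_{Z}(\sigma_{x},\sigma_{x'})\ge d(x,x')$) and $1$-Lipschitz in the apex data (giving $\lvert\sigma_{x}(z,z')-\sigma_{x'}(z,z')\rvert\le d(x,x')$ for \emph{all} $z,z'$).

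To carry out the extension I would use the separation $\overline V\cap A=\emptyset$ together with a continuous cutoff $\beta\colon Z\to[0,1]$ with $\beta\equiv1$ near $p$ and $\beta\equiv0$ off $V$ (so $\beta|_{A}=0$), and the McShane-type function $\delta_{x}(z)=\inf_{u\in X}\bigl[D(x,u)+\eta(z,\iota(u))\bigr]$, which is continuous, agrees with $u\mapsto D(x,u)$ on $\iota(X)$, and satisfies $\lvert\delta_{x}-\delta_{x'}\rvert\le d(x,x')$ pointwise, being an infimum of functions that differ by at most $d(x,x')$. These ingredients should let one spread the single apex value over the whole neighborhood $V$ and interpolate, obtaining a $\sigma_{x}$ whose only $x$-dependence sits on pairs separated by the apex; realized as a pullback of the cone over $(A,D)$ under an $x$-\emph{independent} map $Z\to\mathrm{Cone}(A)$ that collapses $V$ to the apex and respects $A$, the metric depends on the apex heights $C+D(x,\cdot)$ only through the $1$-Lipschitz $\delta_{x}$, so the upper bound follows while exactness on $W$ gives the lower bound. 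For the ``moreover'' clause, if $d$ is bounded with $r:=\sup_{x,x'\in X}d(x,x')<\infty$, I would first replace $(Y,D)$ by $(Y,\min(D,r))$ — which changes neither the subspace $X$, nor $D|_{X^{2}}=d$, nor the embedding of $Y\oplus\ouoneptsp$ — take $e$ bounded, and observe that then every $\sigma_{x}$, hence every $J(x)$, is bounded, so $J$ maps into $\oumetbd{Z}$.

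The hard part will be the third step: arranging the extension so that the supremum is \emph{exactly} $d(x,x')$ rather than merely comparable to it. The naive devices fail instructively — encoding $x$ as a scalar height $\lvert\varphi_{x}(z)-\varphi_{x}(z')\rvert$, or as a radial ``tree'' cone, perturbs distances at \emph{both} endpoints and yields only the bound $2\,d(x,x')$, whereas localizing all the $x$-dependence to the single point $p$ is impossible unless $p$ is isolated in $Z$, since for a non-isolated $p$ continuity forces the variation to bleed into a neighborhood of $p$. The whole point of \ref{item:univ0main} is that the extra point arrives \emph{topologically separated} from a closed copy of $Y$, which is exactly the room needed to spread the apex data over $V$ without contaminating the $A$-part; the unbounded case, where $p$ may be a limit point and $D$ is unbounded, is the most delicate and is precisely where using the cone, rather than a bounded perturbation of $\eta$, becomes essential.
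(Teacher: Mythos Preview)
Your cone construction on $W=A\cup\{p\}$---the metrics $\mu_x$ with $\mu_x|_{A^2}=D$ and $\mu_x(p,a)=C+D(x,\iota^{-1}(a))$---is exactly the paper's key lemma (Lemma~\ref{isometric}), and your verification that $\sup_{W^2}|\mu_x-\mu_{x'}|=d(x,x')$ is correct. The reduction to pseudometrics via $J(x)=e+\sigma_x$ is also fine (this is the paper's Lemma~\ref{lem:wa}/Corollary~\ref{cor:wawa}). So the architecture matches the paper's proof: embed $(X,d)$ isometrically into $\met{W}$, then pass from $\met{W}$ to $\met{Z}$.

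The gap is in your third paragraph. The paper does \emph{not} carry out this extension by hand: it invokes a black-box isometric metric-extension operator (Theorem~\ref{thm:main1} in the paper, proved in a separate article of the first author), which for any closed $W\subseteq Z$ gives $\yomainmap\colon\met{W}\to\met{Z}$ with $\yomainmap(e)|_{W^2}=e$, $\metdis_W(e,e')=\metdis_Z(\yomainmap(e),\yomainmap(e'))$, and $\yomainmap(\oumetbd{W})\subseteq\oumetbd{Z}$. Your attempt to build this operator on the fly is not complete. The ``pullback of the cone over $(A,D)$ under an $x$-independent map $Z\to\mathrm{Cone}(A)$ that \ldots\ respects $A$'' requires, on the set where the height coordinate is positive, a continuous $A$-valued base coordinate restricting to the identity on $A$---i.e.\ a retraction of a neighborhood of $A$ onto $A$---and there is no reason for one to exist when $A$ is an arbitrary closed subset of a metrizable $Z$. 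The McShane function $\delta_x$ cannot rescue this: it is $x$-dependent, so using it to build the map $Z\to\mathrm{Cone}(A)$ forfeits the very $x$-independence you need to avoid the factor~$2$. You never actually write down $\sigma_x(z,z')$ for generic $z,z'\in Z$ and check both inequalities; ``these ingredients should let one\ldots'' is where the proof stops. This is not a routine detail---producing an extension with \emph{exact} (not just Lipschitz) distortion control is precisely the content of Theorem~\ref{thm:main1}, whose proof is substantial.

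Your treatment of the bounded case, replacing $D$ by $\min(D,r)$, is correct and in fact more explicit than the paper's own proof on this point.
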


As our second result, 
Theorem
 \ref{thm:univ0}
yields 
the following theorem 
stating that
if a metric space 
$Z$
contains abundant spaces as subspaces, 
then 
$\met{Z}$
is universal. 
\begin{thm}\label{thm:universal1}
Let 
$\yochara{C}$ 
be a class of 
 metrizable spaces and 
$Z$
 be a metrizable space. 
 Assume that the following condition
 is satisfied:
 \begin{enumerate}[resume*=univ]
 \item\label{item:univ1main}
for each 
$(X, d) \in \yochara{C}$, 
there exists 
a metric   space 
$(Y, D)$  
containing  $X$ 
as a subspace 
such that 
 $D|_{X^{2}}=d$, 
and there exists 
a closed
topological  embedding from 
$Y \oplus \ouoneptsp$ 
to 
$Z$.
\end{enumerate}
Then 
 the space 
 $(\met{Z}, \metdis_{Z})$
 is universal for 
 $\yochara{C}$, 
 and 
 $(\oumetbd{Z}, \metdis_{Z})$
 is universal for 
 $\oucbdd{\yochara{C}}$. 
\end{thm}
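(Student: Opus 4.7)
The plan is to deduce Theorem \ref{thm:universal1} directly from Theorem \ref{thm:univ0} by applying the latter separately to each member of the class $\yochara{C}$. The hypothesis \ref{item:univ1main} is precisely \ref{item:univ0main} asserted uniformly over the class, so no new construction is required; the entire geometric content already resides in Theorem \ref{thm:univ0}, and what remains is essentially an unpacking of quantifiers.

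Concretely, I would fix an arbitrary $(X, d) \in \yochara{C}$ and instantiate \ref{item:univ1main} at this particular pair, obtaining a metric space $(Y, D)$ extending $(X, d)$ together with a closed topological embedding of $Y \oplus \ouoneptsp$ into $Z$. This is exactly the data required by \ref{item:univ0main} for $(X, d)$. Theorem \ref{thm:univ0} then produces an isometric embedding $J_{(X,d)} \colon (X, d) \to (\met{Z}, \metdis_{Z})$. Since $(X, d)$ was arbitrary in $\yochara{C}$, the space $(\met{Z}, \metdis_{Z})$ is universal for $\yochara{C}$. For the bounded assertion, I would further restrict to $(X, d) \in \oucbdd{\yochara{C}}$; then $d$ is bounded, so the ``moreover'' clause of Theorem \ref{thm:univ0} permits choosing $J_{(X,d)}$ with image in $\oumetbd{Z}$, yielding universality of $(\oumetbd{Z}, \metdis_{Z})$ for $\oucbdd{\yochara{C}}$.

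There is no genuine obstacle in this step; it is a straightforward pointwise reduction to the more substantial Theorem \ref{thm:univ0}. The only point requiring care is that, in \ref{item:univ1main}, members of $\yochara{C}$ come equipped with a specified metric $d$, so when invoking Theorem \ref{thm:univ0} one must feed in this very metric (and the extension $(Y, D)$ produced by the hypothesis), rather than some other metric merely generating the same topology on $X$. Once this is observed, the proof is essentially a single line of quantifier manipulation.
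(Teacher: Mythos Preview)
Your proposal is correct and matches the paper's approach exactly: the paper's proof is the single line ``Theorem \ref{thm:universal1} is a direct consequence of Theorem \ref{thm:univ0},'' and you have simply spelled out the quantifier unpacking behind that sentence.
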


We denote by  $\oucube$ 
 (resp.~$\yocantorc$)
 the Hilbert cube, 
 i.e, the countable power of $[0, 1]$ 
 (resp.~the Cantor set, i.e., the countable power of 
 $\{0, 1\}$). 
 Namely, 
 $\oucube=[0, 1]^{\aleph_{0}}$
 and 
 $\yocantorc=\{0, 1\}^{\aleph_{0}}$. 
 For each 
 infinite cardinal 
 $\kappa$, 
 we also denote 
 by 
 $\ouelltwosp{\kappa}$
 the Hilbert space of weight $\kappa$
 (see \cite[p.16]{MR3099433}). 
 Let 
 $\yoirrsp{\kappa}$
 be the  countable power of 
 the discrete space of cardinality
 $\kappa$. 
 Namely, 
 $\yoirrsp{\kappa}=\kappa^{\aleph_{0}}$, 
 where 
 $\kappa$ is 
 equipped with 
 the discrete topology. 

We denote by 
$\yochara{T}$
the class of 
all totally bounded metric spaces. 
As a corollary   of 
Theorem \ref{thm:universal1}, 
we  provide 
several 
universality of 
spaces of metrics. 

\begin{thm}\label{thm:concrete}
The following statements are true: 
\begin{enumerate}[label=\textup{(\arabic*)}]

 \item\label{item:concelltwo}
For every infinite cardinal
$\kappa$, 
the space 
 $(\met{\ouelltwosp{\kappa}}, \metdis_{\ouelltwosp{\kappa}})$
 and 
 $(\met{\yoirrsp{\kappa}}, \metdis_{\yoirrsp{\kappa}})$
 are universal 
 for 
 $\oucwei{\kappa}$. 
Moreover, 
the spaces 
 $(\oumetbd{\ouelltwosp{\kappa}}, \metdis_{\ouelltwosp{\kappa}})$
 and $(\oumetbd{\yoirrsp{\kappa}}, \metdis_{\yoirrsp{\kappa}})$
 are universal 
 for 
 $\oucbdd{\oucwei{\kappa}}$

 \item\label{item:conccube}
The space 
  $(\oumetbd{\oucube}, \metdis_{\oucube})$
  is universal
   for 
   $\yochara{T}$.

    \item\label{item:concgamma}
    For any metrizable space 
$Z$ 
such that there exists a continuous surjection from 
$Z$ to 
$\oucube$, 
the space 
$(\oumetbd{Z}, \metdis_{Z})$
 is universal for
   $\yochara{T}$.
   In particular, 
   $(\oumetbd{\yocantorc}, \metdis_{\yocantorc})$
  is universal 
  for 
     $\yochara{T}$.
  
 \item\label{item:concpolish}
 
For every 
 uncountable Polish space 
 $X$, 
 the space 
 $(\oumetbd{X}, \metdis_{X})$
 is universal 
 for
  $\yochara{T}$. 
  
\end{enumerate}
\end{thm}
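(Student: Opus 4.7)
My plan is to derive all four statements from Theorem~\ref{thm:universal1}, supplemented by a simple pullback construction that I set up once. The \emph{pullback lemma} asserts: given a continuous surjection $\pi\colon Z_{1}\twoheadrightarrow Z_{2}$ between metrizable spaces together with any fixed metric $\rho$ on $Z_{1}$ generating its topology, the map
\[
\Phi\colon \met{Z_{2}}\to \met{Z_{1}}, \qquad \Phi(d)(x,y)=d(\pi(x),\pi(y))+\rho(x,y),
\]
is an isometric embedding for the sup-metrics, and restricts to an isometric embedding $\oumetbd{Z_{2}}\to \oumetbd{Z_{1}}$ when $\rho$ is bounded. Surjectivity of $\pi$ collapses the sup over $Z_{1}\times Z_{1}$ to the sup over $Z_{2}\times Z_{2}$, yielding the isometry, while the summand $\rho$ forces $\Phi(d)$ to be a genuine metric generating the correct topology on $Z_{1}$ rather than merely a pseudometric.

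\textbf{Proof of (2).} Apply Theorem~\ref{thm:universal1} with $Z=\oucube$ and $\yochara{C}=\yochara{T}$. For a totally bounded $(X,d)$, take $Y$ to be the completion $\overline{X}$: total boundedness makes $Y$ compact, so $Y\oplus \ouoneptsp$ is a compact metric space and embeds topologically into $\oucube$ by the classical embedding of compact metric spaces into the Hilbert cube, automatically as a closed subspace.

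\textbf{Proofs of (3) and (4).} For (3), the pullback lemma applied to the hypothesised continuous surjection $Z\twoheadrightarrow\oucube$ yields $\oumetbd{\oucube}\hookrightarrow \oumetbd{Z}$ isometrically; compose with (2). The Cantor-set case uses the classical surjection $\yocantorc\twoheadrightarrow\oucube$. For (4), produce a continuous surjection $X\twoheadrightarrow \oucube$ from an uncountable Polish space $X$ as follows: $X$ contains a topological Cantor set $\yocantorc$ as a closed subspace by Cantor--Bendixson; take a surjection $\yocantorc\twoheadrightarrow\oucube$ and extend it to $X\to\oucube$ via Dugundji's theorem, applicable because $\oucube$ is a convex subset of the locally convex space $\ouelltwo$. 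The extension is surjective because its restriction to $\yocantorc$ already is; now invoke (3).

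\textbf{Proof of (1) and the main obstacle.} For $Z=\ouelltwosp{\kappa}$, apply Theorem~\ref{thm:universal1} with $Y$ the completion of $X$: every complete metric space of weight $\leq\kappa$ embeds topologically as a closed subspace of $\ouelltwosp{\kappa}$ (for $\kappa=\aleph_{0}$ this is the standard identification of Polish spaces with closed subspaces of $\ouelltwo$; a Toru\'nczyk-type universality handles uncountable $\kappa$), and one adjoins an isolated point at positive distance from the image to embed $Y\oplus\ouoneptsp$. For $Z=\yoirrsp{\kappa}$ one \emph{cannot} apply Theorem~\ref{thm:universal1} directly because $\yoirrsp{\kappa}$ is zero-dimensional, which obstructs closed embeddings of non-zero-dimensional $Y\supseteq X$; instead, apply the pullback lemma to a continuous surjection $\yoirrsp{\kappa}\twoheadrightarrow \ouelltwosp{\kappa}$ (analogous to the classical $\yoirrsp{\aleph_{0}}\twoheadrightarrow \ouelltwosp{\aleph_{0}}$; $\yoirrsp{\kappa}$ continuously surjects onto every complete metric space of weight $\leq\kappa$) to obtain $\met{\ouelltwosp{\kappa}}\hookrightarrow \met{\yoirrsp{\kappa}}$, then compose. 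The main obstacles are (a) confirming the pullback lands in the metric locus with the correct topology, cleanly handled by the $\rho$ summand, and (b) citing the right universality theorems for $\ouelltwosp{\kappa}$ and $\yoirrsp{\kappa}$ at uncountable weight, which are standard but require care.
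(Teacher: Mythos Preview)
Your proof is correct and tracks the paper's argument closely. Your ``pullback lemma'' is exactly Lemma~\ref{lem:univsurj} in the paper (with the same $f^{*}d+\rho$ construction), and your proofs of (1), (2), (3) match the paper's essentially line for line: completion plus closed embedding into $\ouelltwosp{\kappa}$ or $\oucube$ for Theorem~\ref{thm:universal1}, then the surjection trick for $\yoirrsp{\kappa}$ and for general $Z$.

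The one genuine divergence is in (4). The paper embeds $\yocantorc$ into $X$ as a closed subspace and then invokes the metric-extension operators (Theorem~\ref{thm:extensionsugoi} or~\ref{thm:main1}) to get an isometric embedding $\oumetbd{\yocantorc}\hookrightarrow\oumetbd{X}$, finishing via (3) for $\yocantorc$. You instead extend the surjection $\yocantorc\twoheadrightarrow\oucube$ to all of $X$ by Dugundji, producing a surjection $X\twoheadrightarrow\oucube$ and reducing directly to (3). Both work; your route avoids the heavier metric-extension machinery at the cost of appealing to Dugundji and realising $\oucube$ as a convex subset of a locally convex space, while the paper's route keeps the toolkit internal but leans on the extension theorems it has already set up. Either argument is short once the ingredients are in place.
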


\begin{rmk}
There exists 
a
continuous surjection
$\beta\colon [0, 1]\to \oucube$
(this is the Hahn--Mazurkiewicz theorem, 
see
\cite[Theorem 31.5]{MR2048350}).  
Thus, 
the unit interval 
$[0, 1]$
satisfies the assumption of 
 Statement 
\ref{item:concgamma}
in 
Theorem
\ref{thm:concrete}.
\end{rmk}

Due to the same argument as Theorem~\ref{thm:universal1}, we can construct some isometric embeddings from a
given 
 bounded metric space 
$(X, d)$
 into 
$\met{X}$
  and 
  $\oumetbd{X}$.
Namely, 
the following 
  theorem is  an 
  analogue of 
  the Kuratowski 
  embedding theorem
  for spaces of 
  metrics
  (for this famous embeddings, see 
  \cite[p.100]{MR3363168}).

  \begin{thm}\label{thm:self0}
  Let 
  $X$ 
  be a metrizable space, 
  and 
  take 
  $d\in \met{X}$. 
 Assume that either of the following is true:
 \begin{enumerate}[label=\textup{(\arabic*)}]

 \item\label{item:self0uniform}
  the space $X$ 
  contains 
  a subset $S$ 
  such that 
  $S$ 
   is 
   uniformly homeomorphic to 
  $X$ 
 (with respect to $d$) 
  and its closure 
  $\cl_{X}(S)$ 
  is a proper subset of 
  $X$; 
 \item\label{item:self0closed}
 the space $X$ contains 
 a closed proper subset 
 $S$ 
 homeomorphic to $X$. 
 
 \end{enumerate}
 Then 
 there exits an isometric embedding from 
 $(X, d)$ into 
 the space 
 $(\met{X}, \metdis_{X})$. 
 Moreover, 
 if 
 $(X, d)$ is bounded, 
 it can be isometrically embedded into 
 $(\oumetbd{X}, \metdis_{X})$. 
  \end{thm}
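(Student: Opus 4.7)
The plan is to reduce both statements \ref{item:self0uniform} and \ref{item:self0closed} to Theorem~\ref{thm:univ0} applied with $Z := X$. In each case I will produce a metric space $(Y, D)$ containing $(X, d)$ isometrically together with a closed topological embedding $Y \oplus \ouoneptsp \hookrightarrow X$; the desired isometric embedding $(X, d) \to (\met{X}, \metdis_{X})$, as well as its bounded counterpart, will then be immediate from the \emph{moreover} clause of that theorem.

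For Case~\ref{item:self0closed}, I take $(Y, D) := (X, d)$ and use the given homeomorphism $\phi \colon X \to S$. Fixing $p \in X \setminus S$ and defining $f \colon X \oplus \ouoneptsp \to X$ by $f|_{X} := \phi$ and $f(\mathrm{pt}) := p$, the image is $S \cup \{p\}$, which is closed in $X$; moreover $f$ is a topological embedding because $p \notin \cl_{X}(S) = S$ makes $p$ an isolated point of the image, so the subspace topology coincides with the topological sum topology. For Case~\ref{item:self0uniform}, where $S$ is only uniformly homeomorphic to $X$ and not necessarily closed, I pass to the completion $(\hat{X}, \hat{d})$ of $(X, d)$. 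The uniform homeomorphism $\phi \colon (X, d) \to (S, d|_{S \times S})$ extends uniquely to a uniform homeomorphism $\hat{\phi} \colon \hat{X} \to \cl_{\hat{X}}(S)$ of completions. Setting
\[
Y := \hat{\phi}^{-1}\bigl(\cl_{X}(S)\bigr) \subseteq \hat{X}, \qquad D := \hat{d}|_{Y \times Y},
\]
I obtain a metric space $(Y, D)$ with $X \subseteq Y$ isometrically, since $\hat{\phi}(X) = S \subseteq \cl_{X}(S)$ and the completion embedding is isometric. The restriction $\hat{\phi}|_{Y} \colon Y \to \cl_{X}(S)$ is a homeomorphism onto the closed subset $\cl_{X}(S) \subsetneq X$, so choosing $p \in X \setminus \cl_{X}(S)$ and defining $f$ analogously produces the required closed topological embedding $Y \oplus \ouoneptsp \hookrightarrow X$.

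The main obstacle is the completion argument in Case~\ref{item:self0uniform}: one would like to extend $\phi^{-1}$ directly to a map $\cl_{X}(S) \to X$, but since $X$ need not be complete such an extension can escape $X$. Performing the extension inside $\hat{X}$ and then pulling back the closed set $\cl_{X}(S)$ puts $Y$ on the correct side of the homeomorphism, yielding an isometric extension of $(X, d)$ that topologically embeds back into $X$ as a closed subset. With this $(Y, D)$ and $f$ in hand, both cases fall under the hypothesis~\ref{item:univ0main} of Theorem~\ref{thm:univ0}, and the conclusion follows.
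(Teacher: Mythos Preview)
Your proof is correct and follows essentially the same route as the paper: in both cases you verify hypothesis~\ref{item:univ0main} of Theorem~\ref{thm:univ0} with $Z=X$, taking $(Y,D)=(X,d)$ in Case~\ref{item:self0closed} and an extension obtained from the uniform homeomorphism in Case~\ref{item:self0uniform}, then adjoining a point $p\in X\setminus\cl_X(S)$. Your completion argument in Case~\ref{item:self0uniform} is in fact more carefully spelled out than the paper's, which simply asserts the existence of the extension $(Y,D)$ and the homeomorphism $F\colon Y\to\cl_X(S)$ from uniform continuity without writing down the construction.
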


\subsubsection{Discrete spaces}
In the theorems 
explained  above, 
we focus on a space $X$ that 
contains 
 abundant spaces as 
subspaces. 
In the contrast, 
we next shall consider 
 discrete spaces. 
For a set 
$S$, 
we denote by 
$\ouellsp{S}$
the set of all bounded functions 
from 
$S$ 
to 
$\rr$ 
equipped with 
the sup-metric 
$\ouelldis{*}$
($\ouellinf$-norm). 
Based on 
 a different method from 
the above 
theorems, 
we verify 
the universality of 
the space of metrics on 
 discrete spaces.

\begin{thm}\label{thm:univ2k}
Let 
$\kappa$
be an infinite 
 cardinal,  
 and regard it as a discrete space. 
 Then 
the following statements are true:
\begin{enumerate}[label=\textup{(\arabic*)}]
\item\label{item:univ2ellisomk}
The space 
$(\ouellsp{\kappa}, \ouelldis{*})$ 
can be isometrically embedded into 
the space 
$(\met{\kappa}, \metdis_{\kappa})$.
In particular, 
the space 
$(\met{\kappa}, \metdis_{\kappa})$
is universal 
for 
  $\oucwei{\kappa}$. 
\item\label{item:univ2bddisomk}
The space 
$(\oumetbd{\kappa}, \metdis_{\kappa})$
is universal 
for 
  $\oucbdd{\oucwei{\kappa}}$. 
  
\end{enumerate}
\end{thm}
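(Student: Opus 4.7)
The plan is to prove assertion~\ref{item:univ2ellisomk} by constructing an explicit isometric embedding $\Phi\colon (\ouellsp{\kappa},\ouelldis{*})\to (\met{\kappa},\metdis_{\kappa})$, and then to deduce both universality assertions by composing $\Phi$ (and its bounded analogue) with a Fr\'echet-type isometric embedding of any $(X,m)\in\oucwei{\kappa}$ into $(\ouellsp{\kappa},\ouelldis{*})$: fix a dense subset $\{x_{\alpha}\}_{\alpha<\kappa}$ and a basepoint $x_{0}\in X$, and send $x\mapsto(m(x,x_{\alpha})-m(x_{0},x_{\alpha}))_{\alpha<\kappa}$, a standard calculation showing that this is distance-preserving.

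For the construction of $\Phi$, since $\kappa$ is infinite I identify $\kappa=\{\ast\}\sqcup(\kappa\times\yoomega)$ as sets, where $\ast$ is a distinguished basepoint. For $f\in \ouellsp{\kappa}$ define
\[
d_{f}(\ast,(\alpha,n))=\max(f(\alpha)+n,\,1),\qquad d_{f}((\alpha,n),(\beta,m))=\max\!\bigl(d_{f}(\ast,(\alpha,n)),\,d_{f}(\ast,(\beta,m))\bigr)
\]
for $(\alpha,n)\ne(\beta,m)$, and $d_{f}(p,p)=0$. A direct check shows $d_{f}$ is an ultrametric on $\kappa$ whose non-identity distances are all $\ge 1$, so $d_{f}\in\met{\kappa}$. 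That $\Phi(f):=d_{f}$ is an isometry follows because $t\mapsto\max(t,1)$ and $(s,t)\mapsto\max(s,t)$ are $1$-Lipschitz, giving $|d_{f}(p,q)-d_{g}(p,q)|\le\ouelldis{f-g}$ for every pair; conversely, for each $\alpha<\kappa$, choosing $n\in\yoomega$ large enough that both $f(\alpha)+n\ge 1$ and $g(\alpha)+n\ge 1$ yields $|d_{f}(\ast,(\alpha,n))-d_{g}(\ast,(\alpha,n))|=|f(\alpha)-g(\alpha)|$, and taking the supremum over $\alpha$ gives $\metdis_{\kappa}(d_{f},d_{g})\ge\ouelldis{f-g}$. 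Composing $\Phi$ with the Fr\'echet embedding gives the universality asserted in~\ref{item:univ2ellisomk}.

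For~\ref{item:univ2bddisomk}, I repeat the construction in a bounded variant. Given $(X,m)\in\oucbdd{\oucwei{\kappa}}$ of diameter $D$, the Fr\'echet image of $X$ lies in the $D$-ball $B_{D}:=\{f\in\ouellsp{\kappa}:\ouelldis{f}\le D\}$. Identify $\kappa=\{\ast\}\sqcup\kappa$, dropping the factor $\yoomega$, and for $f\in B_{D}$ set $d'_{f}(\ast,\alpha)=2D+1+f(\alpha)$ and $d'_{f}(\alpha,\beta)=\max(d'_{f}(\ast,\alpha),d'_{f}(\ast,\beta))$ for $\alpha\ne\beta$; then $d'_{f}$ is an ultrametric with non-identity distances in $[D+1,\,3D+1]$, so $d'_{f}\in\oumetbd{\kappa}$, and the same $1$-Lipschitz computation shows $f\mapsto d'_{f}$ is an isometry on $B_{D}$. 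Composing with the Fr\'echet embedding yields the isometric embedding $(X,m)\hookrightarrow\oumetbd{\kappa}$.

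The main obstacle is arranging simultaneously that each $d_{f}$ is a valid metric generating the discrete topology on $\kappa$ and that $f\mapsto d_{f}$ is exactly distance-preserving. The naive ``sum-star'' rule $d_{f}((\alpha,n),(\beta,m))=d_{f}(\ast,(\alpha,n))+d_{f}(\ast,(\beta,m))$ would inflate $|d_{f}-d_{g}|$ up to $2\ouelldis{f-g}$ on pairs of non-base points, while a fixed additive shift $C+f(\alpha)$ fails for~\ref{item:univ2ellisomk} because $\ouelldis{f}$ is unbounded on $\ouellsp{\kappa}$; the unbounded ``max-star'' with the auxiliary factor $\yoomega$ resolves both issues at once, and the bounded variant with a single copy per $\alpha$ suffices for~\ref{item:univ2bddisomk}.
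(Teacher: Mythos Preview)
Your argument is correct, and it follows a genuinely different route from the paper's proof. The paper partitions $\kappa=\bigcup_{i\in\zz_{\ge 0}}S_{i}$ into countably many pieces of full cardinality, fixes surjections $\tau_{i}\colon[S_{i}]^{2}\to\kappa$, and encodes $a\in\ouellsp{\kappa}$ on \emph{pairs} via truncation maps $R_{2^{i}}$, setting $G_{i}(a)(p,q)=R_{2^{i}}(a(\tau_{i}(\{p,q\})))+3\cdot 2^{i}$; Lemma~\ref{lem:www} (values in $[L,2L]$ force the triangle inequality) is then invoked rather than an ultrametric argument, and the pieces are glued with fixed cross-distances $2^{i+2}$. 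By contrast, you fix a single basepoint $\ast$, encode $f(\alpha)$ as the distance $d_{f}(\ast,(\alpha,n))=\max(f(\alpha)+n,1)$, and let the max-star rule produce an ultrametric directly. Your $\yoomega$-factor plays the same role as the paper's countable family of levels $S_{i}$: it supplies enough additive room so that for any fixed pair $f,g$ one can always find a point where the clamping is inactive. Your approach is more elementary (no auxiliary lemma, and the ultrametric inequality is immediate), and it yields ultrametrics in $\met{\kappa}$ as a bonus; the paper's scheme has the mild advantage that the bounded case~\ref{item:univ2bddisomk} falls out by simply restricting to finitely many levels $\coprod_{i\le N}S_{i}$, whereas you write a separate (though equally short) bounded construction.
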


\begin{cor}\label{cor:ukappa}
Let 
$Z$
be a 
metrizable space, 
and 
$\kappa$ be an 
infinite cardinal. 
Assume that
either of the following 
is true:
\begin{enumerate}[label=\textup{(\arabic*)}]
\item\label{item:ukappaa}
the space 
$Z$
contains 
a discrete space 
of cardinality
$\kappa$;
\item\label{item:ukappab}
the space 
$Z$
 has 
 weight 
 $\kappa$, 
 and 
 $\kappa$
 has uncountable 
 cofinality. 
\end{enumerate}
Then 
$(\met{Z}, \metdis_{Z})$
is universal for 
$\oucwei{\kappa}$, 
and 
$(\oumetbd{Z}, \metdis_{Z})$
 is universal 
 for 
 $\oucbdd{\oucwei{\kappa}}$. 
\end{cor}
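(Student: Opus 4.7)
The plan is to reduce Case~(2) to Case~(1) by a density argument, and then, under Case~(1), to apply Theorem~\ref{thm:univ2k} followed by a path-metric extension from the closed discrete subspace of $Z$ to all of $Z$.

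For the reduction, fix any compatible metric on $Z$ and, for each $n \in \nn$, pick a maximal $(1/n)$-separated subset $S_n \subseteq Z$. Each $S_n$ is uniformly discrete, hence closed discrete in $Z$, and $\bigcup_n S_n$ is dense by maximality. Since $Z$ has weight $\kappa$, $\bigl|\bigcup_n S_n\bigr| \ge \kappa$; and since $\mathrm{cf}(\kappa) > \aleph_0$, a countable union of sets of cardinality strictly less than $\kappa$ has cardinality less than $\kappa$. Hence some $S_n$ has cardinality $\kappa$, so $Z$ contains a closed discrete subset of cardinality $\kappa$, which is the hypothesis of Case~(1).

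Now assume Case~(1): let $D \subseteq Z$ be closed discrete of cardinality $\kappa$, and fix a bijection $\phi \colon \kappa \to D$. By Theorem~\ref{thm:univ2k}, for each $(X, d) \in \oucwei{\kappa}$ there is an isometric embedding $J_0 \colon (X, d) \to (\met{\kappa}, \metdis_\kappa)$ with $\kappa$ discrete; the bounded variant $\oucbdd{\oucwei{\kappa}} \to \oumetbd{\kappa}$ holds by the same theorem. Hence it suffices to construct an isometric map $E \colon (\met{\kappa}, \metdis_\kappa) \to (\met{Z}, \metdis_Z)$ sending $\oumetbd{\kappa}$ into $\oumetbd{Z}$, for then $E \circ J_0$ is the desired embedding. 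For $E$, choose a compatible metric $\rho$ on $Z$ in which $D$ is sufficiently uniformly separated (possible because $D$ is closed discrete in a paracompact metric space, so we may rescale a compatible metric through a discrete open family around $D$), and for $m \in \met{\kappa}$ set
\[
E(m)(u, v) = \min\Bigl(\rho(u, v),\ \inf_{\alpha, \beta < \kappa}\bigl[\rho(u, \phi(\alpha)) + m(\alpha, \beta) + \rho(\phi(\beta), v)\bigr]\Bigr).
\]
Standard graph-metric arguments show $E(m)$ is a continuous pseudometric, the off-diagonal positivity of $m$ together with $\rho \in \met{Z}$ makes it a metric, and $E(m)|_{D \times D}$ agrees with $m$ (via $\phi$) whenever $\rho$ dominates $m$ on $D$. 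The $1$-Lipschitz inequality $|E(m)(u, v) - E(m')(u, v)| \le \metdis_\kappa(m, m')$ arising from the infimum, combined with the restriction identity on $D$, then yields $\metdis_Z(E(m), E(m')) = \metdis_\kappa(m, m')$.

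The main obstacle is arranging the domination $\rho \ge m$ on $D$: for the bounded universality this is immediate (take $\rho|_D$ uniformly separated by a constant exceeding the common diameter), but $\met{\kappa}$ contains metrics of arbitrary diameter, so no single compatible $\rho$ works uniformly. I would overcome this either by letting $\rho$ depend on $m$, with a careful verification that the sup-metric isometric identity for $E$ survives this pointwise construction, or by bypassing $E$ altogether and composing Theorem~\ref{thm:univ2k} with the Kuratowski parametrization $x \mapsto \bigl(d(x, a_\alpha) - d(x_0, a_\alpha)\bigr)_{\alpha < \kappa} \in \ouellsp{\kappa}$, which replaces the data by bounded-per-$x$ functions before the extension step and permits a single $\rho$ whose separation on $D$ grows appropriately along $\phi(\alpha) \mapsto d(x_0, a_\alpha)$, constructed from $D$ by a paracompactness-based extension of a prescribed metric on a closed discrete subset. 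Verifying that the resulting assignment $x \mapsto J(x)$ lands inside $\met{Z}$ and is isometric is the technical crux of the proof.
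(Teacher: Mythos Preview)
Your reduction of Case~(2) to Case~(1) via maximal $(1/n)$-separated sets is correct and coincides with the paper's argument.

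For Case~(1), however, you are working far harder than necessary and you do not close the gap you yourself identify. The paper's proof is one line: since $D$ is closed in $Z$, Theorem~\ref{thm:main1} furnishes an isometric extension operator $\yomainmap\colon \met{D}\to \met{Z}$ with $\yomainmap(\oumetbd{D})\subseteq\oumetbd{Z}$, and composing it with the embedding of Theorem~\ref{thm:univ2k} (after identifying $D$ with the discrete space $\kappa$) finishes both the bounded and unbounded statements at once. Your graph-metric $E$ is an attempt to rebuild such an operator by hand, and the obstacle you flag---that no single compatible $\rho$ dominates every $m\in\met{\kappa}$ on $D$---is exactly why Theorem~\ref{thm:main1} is a nontrivial result. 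Neither of your proposed workarounds is complete: if $\rho$ is allowed to vary with $m$, then $E(m)$ and $E(m')$ are built over different background metrics and the inequality $|E(m)(u,v)-E(m')(u,v)|\le \metdis_{\kappa}(m,m')$ no longer follows from your formula; the Kuratowski detour still ends with ``a paracompactness-based extension of a prescribed metric on a closed discrete subset,'' which is precisely the extension problem you were trying to avoid. So there is a genuine gap in the unbounded case, and it is filled immediately by invoking Theorem~\ref{thm:main1} rather than reconstructing it.
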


Concentrating on 
the universality for 
 separable or compact metric spaces, 
 we summarize 
 our  theorems appearing above 
 as follows: 
\begin{cor}\label{cor:univ3}
Let $X$ be an infinite metric space. 
\begin{enumerate}[label=\textup{(\arabic*)}]
\item\label{item:iniv3cptunc} 
If 
$X$
 is 
 compact and uncountable, 
then 
$(\met{X}, \metdis_{X})$ 
is 
universal 
 for 
$\yochara{T}$. 
\item\label{item:iniv3ncpt} 
If $X$ is non-compact, 
then 
$(\met{X}, \metdis_{X})$ 
contains an isometric copy
 of 
$(\ouellinf, \ouelldis{*})$. 
In particular, 
$(\met{X}, \metdis_{X})$ 
is universal for 
$\oucsep$. 
Moreover, the space 
$(\oumetbd{X}, \metdis_{X})$
is universal for 
$\oucbdd{\oucsep}$. 
\end{enumerate}
As a consequence, 
if 
$X$ is non-compact, or compact and uncountable, 
then
$(\oumetbd{X}, \metdis_{X})$ is universal 
for all 
compact metric spaces. 
\end{cor}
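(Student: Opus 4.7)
\textbf{Proof plan for Corollary \ref{cor:univ3}.}

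My plan is to dispatch statements \ref{item:iniv3cptunc} and \ref{item:iniv3ncpt} separately by invoking the preceding theorems, and then combine them in an easy bookkeeping step to obtain the final consequence. The two cases split along the natural dichotomy: compact uncountable spaces are handled by the Polish-space machinery of Theorem \ref{thm:concrete}, while non-compact spaces are handled by the discrete-subspace machinery of Corollary \ref{cor:ukappa} and Theorem \ref{thm:univ2k}.

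For statement \ref{item:iniv3cptunc}, I observe that every compact metric space is Polish, so a compact uncountable metric space $X$ is in particular uncountable Polish. Moreover compactness forces $\met{X} = \oumetbd{X}$. Theorem \ref{thm:concrete}\ref{item:concpolish} applied directly to $X$ then says that $(\oumetbd{X}, \metdis_{X}) = (\met{X}, \metdis_{X})$ is universal for the class $\yochara{T}$ of totally bounded metric spaces, which is precisely the claim.

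For statement \ref{item:iniv3ncpt}, the first step is to exhibit a countably infinite closed discrete subspace $D \yosub X$. This follows from the Bolzano--Weierstrass characterisation of metrizable compactness: since $X$ is non-compact, it contains an infinite subset with no accumulation point in $X$, and any such subset is automatically closed and discrete in $X$; passing to a countable subset gives $D$ of cardinality $\aleph_{0}$. Corollary \ref{cor:ukappa}\ref{item:ukappaa} with $\kappa = \aleph_{0}$ now delivers the two universality claims: $(\met{X}, \metdis_{X})$ is universal for $\oucwei{\aleph_{0}} = \oucsep$, and $(\oumetbd{X}, \metdis_{X})$ is universal for $\oucbdd{\oucsep}$. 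For the finer assertion that $(\ouellinf, \ouelldis{*})$ embeds isometrically into $(\met{X}, \metdis_{X})$, I apply Theorem \ref{thm:univ2k}\ref{item:univ2ellisomk} with $\kappa = \aleph_{0}$ to obtain an isometric embedding $(\ouellinf, \ouelldis{*}) \hookrightarrow (\met{\aleph_{0}}, \metdis_{\aleph_{0}}) \cong (\met{D}, \metdis_{D})$, and then transfer it to $(\met{X}, \metdis_{X})$ by extending each metric $d \in \met{D}$ to a metric $\tilde{d} \in \met{X}$ along a fixed compatible reference metric on $X$ (a graph-metric-type infimum through $D$ is the natural candidate), in such a way that the supremum computing $\metdis_{X}(\tilde{d}_{1}, \tilde{d}_{2})$ is attained on $D \times D$ and therefore equals $\metdis_{D}(d_{1}, d_{2})$.

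For the final consequence, every compact metric space is bounded, separable, and totally bounded, hence lies in both $\yochara{T}$ and $\oucbdd{\oucsep}$. If $X$ is compact and uncountable, statement \ref{item:iniv3cptunc} supplies the embedding into $(\met{X}, \metdis_{X}) = (\oumetbd{X}, \metdis_{X})$; if $X$ is non-compact, statement \ref{item:iniv3ncpt} supplies the embedding into $(\oumetbd{X}, \metdis_{X})$. The main obstacle I anticipate is arranging the extension $\met{D} \to \met{X}$ in the third paragraph so that the sup-metric is preserved \emph{exactly}, since Corollary \ref{cor:ukappa} only guarantees universality for $\oucwei{\aleph_{0}}$ and $\ouellinf$ itself sits outside this class; once an appropriate extension formula is chosen and its sup-distance behaviour is verified, the cited theorems do the rest of the work.
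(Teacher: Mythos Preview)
Your approach is essentially the paper's: part \ref{item:iniv3cptunc} via Theorem \ref{thm:concrete}\ref{item:concpolish}, and part \ref{item:iniv3ncpt} via a countably infinite closed discrete subset $D\yosub X$ together with Theorem \ref{thm:univ2k}. The one place you diverge is the transfer step $\met{D}\hookrightarrow \met{X}$ for the $\ouellinf$-embedding, where you propose a hand-built ``graph-metric-type infimum'' extension and flag it as the main obstacle. In the paper this step is not an obstacle at all: it is precisely Theorem \ref{thm:main1}, which supplies an isometric operator $\yomainmap\colon \met{D}\to \met{X}$ with $\yomainmap(\oumetbd{D})\yosub \oumetbd{X}$. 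Composing $\yomainmap$ with the embedding $\ouellsp{\yoomega}\hookrightarrow \met{\yoomega}\cong \met{D}$ from Theorem \ref{thm:univ2k}\ref{item:univ2ellisomk} finishes the argument in one line. (Note also that your invocation of Corollary \ref{cor:ukappa}\ref{item:ukappaa} already implicitly uses Theorem \ref{thm:main1}; that is how the corollary is proved.)

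A word of caution on your proposed construction: a naive infimum-through-$D$ formula such as $\tilde d(x,y)=\min\bigl(e(x,y),\ \inf_{p,q\in D}\{e(x,p)+d(p,q)+e(q,y)\}\bigr)$ will generally \emph{not} restrict to $d$ on $D^{2}$ (the infimum can undercut $d(p,q)$ via intermediate points of $D$), and hence will not give $\metdis_{X}(\tilde d_{1},\tilde d_{2})=\metdis_{D}(d_{1},d_{2})$. Getting an extension operator that is simultaneously metric-preserving on $D$ and sup-metric isometric is exactly the content of Theorem \ref{thm:main1}, proved in \cite{Ishiki2024isomope}, and is not something to improvise. Replace your third-paragraph construction by a citation of Theorem \ref{thm:main1} and the proof is complete.
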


\subsection{Case of compact and countable spaces}
Based on 
Corollary 
\ref{cor:univ3}, 
a next point of concern is 
what happens for 
$\met{X}$
when $X$ is compact and countable. 
Employing 
results from  the theory of Banach spaces, 
we prove  the  theorem concerning  the lack of the universality. 
\begin{thm}\label{thm:nonuniv}
Let 
$X$ be a  compact 
and countable 
metrizable space. 
Then the space 
$(\met{X}, \metdis_{X})$
is not  
universal for 
all compact metric spaces. 
Namely, 
there exists a 
compact metric space 
that can not be isometrically embedded into 
$(\met{X}, \metdis_{X})$. 
\end{thm}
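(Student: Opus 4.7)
The plan is to reduce to a non-embeddability statement inside a concrete Banach space. Since $X$ is compact, $\met{X}=\oumetbd{X}$ and the metric $\metdis_{X}$ is precisely the restriction to $\met{X}$ of the sup-norm of the Banach space $(C(X\times X,\rr),\|\cdot\|_{\infty})$; in particular $(\met{X},\metdis_{X})$ sits isometrically as a subset of $C(X\times X)$, and it suffices to exhibit one compact metric space that cannot be isometrically embedded into $C(X\times X)$. Because $X$ is also countable, $X\times X$ is a countable compact metric space, so $C(X\times X)$ is a space of continuous functions on a scattered compact.

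My candidate compact metric space is the closed Euclidean unit disc $M:=\{v\in\rr^{2}:\|v\|_{2}\le 1\}$, equipped with its Euclidean metric. The non-embeddability of $M$ rests on two Banach-space facts. First, the Euclidean plane $(\rr^{2},\|\cdot\|_{2})$ admits no linear isometric embedding into $C(K)$ for any countable compact Hausdorff $K$: such a map $T$ would give a continuous $\Phi\colon K\to\rr^{2}$, $k\mapsto(Te_{1}(k),Te_{2}(k))$, with $\sup_{k\in K}|\langle v,\Phi(k)\rangle|=\|v\|_{2}$ for every $v\in\rr^{2}$. For unit vectors $v$ the supremum is attained by compactness of $K$, and Cauchy--Schwarz together with strict convexity of $\|\cdot\|_{2}$ forces the attaining $k$ to satisfy $\Phi(k)=\pm v$; hence $\Phi(K)\cup(-\Phi(K))\supseteq S^{1}$, which contradicts the countability of $K$.

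Second, any isometric embedding $\phi\colon M\to C(X\times X)$ is affine on $M$. Since $M$ is the closed unit ball of the strictly convex Banach space $\ell_{2}^{2}$ of dimension $\ge 2$, a Mazur--Ulam-type rigidity theorem forces such a $\phi$ to be affine. Translating by $-\phi(0)$ then produces a linear isometric embedding of $\rr^{2}$ into $C(X\times X)$, contradicting the first step. Therefore $M$ does not isometrically embed into $\met{X}$, as required.

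The main obstacle will be rigorously justifying the affineness of $\phi$ in the second step: classical Mazur--Ulam gives affineness only for surjective isometries between Banach spaces, and Mankiewicz's theorem handles bijective isometries between open convex subsets, whereas here the input is an into-isometric embedding of a closed convex body. One route is Figiel's linearization lemma applied after extending $\phi$ beyond $M$; another is to restrict to $\mathrm{int}\,M$, realize $\phi(\mathrm{int}\,M)$ as an open subset of its affine hull, and invoke the open-set version of Mankiewicz, using the uniqueness of Euclidean midpoints supplied by strict convexity. Establishing this rigidity is the technical heart of the proof.
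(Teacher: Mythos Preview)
Your reduction to non-embeddability into the Banach space $C(X\times X)$ is exactly where the paper begins as well, but from there the arguments diverge. The paper's counterexample is a certain compact ball of Lipschitz functions $K\subset C(\theta)$ for a large countable ordinal $\theta$, and it invokes two black-box Banach-space results: one guaranteeing that $C(\theta)$ does not linearly isometrically embed into $C(X\times X)$ once $\theta$ has high enough Cantor--Bendixson rank, and another (of Dutrieux--Lancien / Godefroy--Kalton type) saying that isometric embeddability of this particular $K$ into a Banach space $B$ forces linear isometric embeddability of all of $C(\theta)$ into $B$. Your Euclidean-disc candidate, if the argument can be completed, is more concrete and avoids this machinery entirely.

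However, step~4 has a genuine gap that cannot be closed along the lines you sketch. The assertion that every isometric embedding of the closed Euclidean ball into a Banach space is affine is \emph{false} in general: the Kuratowski map $\phi\colon M\to C(M)$, $\phi(x)(k)=\|x-k\|_{2}$, is an isometric embedding that is not affine (evaluate at $k=(0,\tfrac12)$ and compare with the midpoint of $\pm e_{1}$). Mankiewicz's theorem needs a bijection onto an open subset of a normed space, and $\phi(\operatorname{int}M)$ is not open in $C(X\times X)$ nor in any finite-dimensional affine subspace; Figiel's lemma produces only a linear left inverse, not affineness of $\phi$ itself. So ``prove affineness first, then reduce to the linear case'' is not a viable route.

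The good news is that affineness is unnecessary: the antipodal idea behind your step~3 works directly in the nonlinear setting. Given any isometric $\phi\colon M\to C(K)$, put $\psi_{k}=\phi(\,\cdot\,)(k)$; each $\psi_{k}\colon M\to\rr$ is $1$-Lipschitz for $\|\cdot\|_{2}$. For every $v\in S^{1}$ compactness of $K$ gives some $k$ with $|\psi_{k}(v)-\psi_{k}(-v)|=2$. If the same $k$ also serves $w\in S^{1}$ with, say, $\psi_{k}(v)-\psi_{k}(-v)=\psi_{k}(w)-\psi_{k}(-w)=2$, then summing the Lipschitz bounds $\psi_{k}(v)-\psi_{k}(-w)\le\|v+w\|_{2}$ and $\psi_{k}(w)-\psi_{k}(-v)\le\|v+w\|_{2}$ yields $4\le 2\|v+w\|_{2}$, forcing $v=w$; the opposite sign choice gives $v=-w$. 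Hence each $k\in K$ attains for at most one antipodal pair, giving an injection $S^{1}/\{\pm1\}\hookrightarrow K$, impossible when $K$ is countable. This completes your program without any Mazur--Ulam-type rigidity.
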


We denote by 
$\oucosp{\yoomega}$
the set of 
all 
maps $f\colon \yoomega\to \rr$
such that 
$\lim_{n\to \infty}f(n)=0$, 
equipped with
the supremum metric. 
In contrast to 
Theorem 
\ref{thm:nonuniv}, 
we  obtain the next 
isometric embeddings 
from 
compact subsets of 
$\oucosp{\yoomega}$
into 
$\met{X}$, 
where 
$X$
 is compact and 
countable.

\begin{thm}\label{thm:countuniv}
Let 
$X$ be a 
compact 
and 
countably infinite 
metrizable space. 
Then 
every 
compact subset 
of 
$\oucosp{\yoomega}$
can be 
isometrically embedded
into 
the space 
$(\met{X}, \metdis_{X})$. 
Namely, 
the space 
$(\met{X}, \metdis_{X})$
is universal 
for all 
compact subsets of 
$\oucosp{\yoomega}$. 
\end{thm}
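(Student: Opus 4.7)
Let $K \yosub \oucosp{\yoomega}$ be a compact subset and set $a_{n} := \sup_{f \in K} |f(n)|$; the standard compactness criterion for $\oucosp{\yoomega}$ gives $a_{n} \to 0$. The overall strategy is to reduce to the one-point compactification $\yoomega + 1$ via a continuous surjection from $X$, and then exploit a $\max$-based metric formula on $\yoomega + 1$ to recover the supremum norm exactly. Since $X$ is compact, countably infinite, and metrizable, it is scattered and not discrete; hence the isolated points of $X$ are dense and $X$ admits at least one non-isolated point $p_{\infty}$. Using a countable base at $p_{\infty}$, extract a sequence $(p_{n})_{n \in \yoomega}$ of distinct isolated points with $p_{n} \to p_{\infty}$, so that $F := \{p_{n} : n \in \yoomega\} \cup \{p_{\infty}\}$ is closed. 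The map $q \colon X \to \yoomega + 1$ defined by $q(p_{n}) := n$ and $q(x) := \infty$ for $x \notin \{p_{n}\}_{n \in \yoomega}$ is then a continuous surjection.

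Next, pick $\ell_{n} > 0$ with $\ell_{n} > a_{n}$ and $\ell_{n} \to 0$ (for instance $\ell_{n} := a_{n} + 2^{-n}$); then $\ell_{n} + f(n) > 0$ and $\ell_{n} + f(n) \to 0$ for every $f \in K$. For each $f \in K$, define $d_{f}$ on $\yoomega + 1$ by $d_{f}(x, x) := 0$,
\[
d_{f}(n, \infty) := \ell_{n} + f(n), \quad d_{f}(n, m) := \max(\ell_{n} + f(n),\, \ell_{m} + f(m)) \text{ for } n \neq m.
\]
Routine verifications show $d_{f} \in \met{\yoomega + 1}$. Fix any reference metric $d_{0} \in \met{X}$ and define
\[
\Phi(f)(x, y) := d_{0}(x, y) + d_{f}(q(x), q(y)).
\]
Then $\Phi(f)$ is a continuous metric on $X$, and since $X$ is compact Hausdorff, any continuous metric on $X$ generates the original topology; hence $\Phi(f) \in \met{X}$.

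To check that $\Phi$ is isometric, surjectivity of $q$ yields
\[
\metdis_{X}(\Phi(f), \Phi(g)) = \sup_{x, y \in X} |d_{f}(q(x), q(y)) - d_{g}(q(x), q(y))| = \metdis_{\yoomega + 1}(d_{f}, d_{g}).
\]
At pairs $(n, \infty)$ the difference equals $|f(n) - g(n)|$, whose supremum is $\|f - g\|_{\infty}$ and is attained since $f - g \in \oucosp{\yoomega}$. At pairs $(n, m)$ with $n \neq m$, the inequality $|\max(a, b) - \max(c, d)| \leq \max(|a - c|, |b - d|)$ gives $|d_{f}(n, m) - d_{g}(n, m)| \leq \max(|f(n) - g(n)|, |f(m) - g(m)|) \leq \|f - g\|_{\infty}$. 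Thus $\Phi \colon K \to \met{X}$ is an isometric embedding. The principal obstacle is arranging an exact isometry rather than a bi-Lipschitz embedding of distortion $2$: naive perturbations such as $d_{0}(x, y) + |h_{f}(x) - h_{f}(y)|$, where $h_{f}$ encodes $f$ on $(p_{n})$, lose a factor of two because $h_{f} - h_{g}$ can assume both signs along the sequence. The $\max$-based construction sidesteps this by forcing the difference $d_{f}(n, m) - d_{g}(n, m)$ to depend on a single coordinate of $f - g$.
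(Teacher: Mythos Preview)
Your proof is correct, and it follows the same high-level strategy as the paper---reduce to $\met{\yoomega+1}$, then encode each $f\in K$ as a metric on $\yoomega+1$ after shifting $K$ to ensure positivity---but the two implementations differ in both steps. For the reduction, the paper embeds $\yoomega+1$ as a closed subspace of $X$ and invokes the isometric extension operator $\met{\yoomega+1}\hookrightarrow\met{X}$ (Theorem~\ref{thm:main1}), whereas you build a continuous surjection $q\colon X\to\yoomega+1$ and pull back via $f^{*}$ plus a fixed $d_{0}\in\met{X}$ (this is exactly Lemma~\ref{lem:univsurj}); your route avoids the extension machinery entirely. For the encoding, the paper pairs up points $S_{k}=\{2k,2k+1\}$ and sets the within-pair distance to $f(k)$, with all cross-pair and point-to-$\yoomega$ distances frozen at the envelope $U$, so that $G(f)-G(g)$ is supported only on the pairs and equals $f(k)-g(k)$ there. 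Your $\max$-metric $d_{f}(n,m)=\max(\ell_{n}+f(n),\ell_{m}+f(m))$, $d_{f}(n,\infty)=\ell_{n}+f(n)$ is more economical: no pairing is needed, and the $(n,\infty)$ entries already realise $|f(n)-g(n)|$ exactly while the $1$-Lipschitz property of $\max$ controls the remaining entries. Both constructions work; yours is slightly more direct, while the paper's pairing device is closer in spirit to the proof of Theorem~\ref{thm:univ2k} and makes the ``only the diagonal blocks move'' structure explicit.
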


Combining Theorem \ref{thm:countuniv}
and  Aharoni's 
result 
\cite{MR0511661}, 
we have: 

\begin{cor}\label{cor:bilip}
Let 
$X$
be a 
 compact
 countably infinite   metrizable space. 
Then
every compact  metric space can be 
bi-Lipschitz embedded into 
$\met{X}$. 
\end{cor}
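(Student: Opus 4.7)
The plan is to chain two embeddings, one supplied by Aharoni's theorem and one supplied by Theorem~\ref{thm:countuniv}. Let $(K, \rho)$ be an arbitrary compact metric space. Aharoni's theorem \cite{MR0511661} asserts that every separable metric space admits a bi-Lipschitz embedding into $\oucosp{\yoomega}$; in particular, since $K$ is compact and hence separable, there exists a bi-Lipschitz map $\varphi\colon (K, \rho)\to (\oucosp{\yoomega}, \ouelldis{*})$.

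Next I would observe that the image $\varphi(K)$ is a compact subset of $\oucosp{\yoomega}$: indeed, a bi-Lipschitz map is in particular continuous, and continuous images of compact spaces are compact. Since $X$ is assumed to be compact and countably infinite, Theorem~\ref{thm:countuniv} then provides an isometric embedding $\psi\colon (\varphi(K), \ouelldis{*}|_{\varphi(K)^{2}})\to (\met{X}, \metdis_{X})$.

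Finally I would form the composition $\psi\circ \varphi\colon K\to \met{X}$. Composing a bi-Lipschitz map with an isometric embedding yields a bi-Lipschitz embedding (the isometry contributes a factor of $1$ to both the upper and lower Lipschitz constants, so the distortion is preserved from that of $\varphi$). This gives the desired bi-Lipschitz embedding of $(K, \rho)$ into $(\met{X}, \metdis_{X})$ and completes the proof.

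Honestly, there is no serious obstacle here: the statement is a formal consequence of Theorem~\ref{thm:countuniv} and Aharoni's theorem. The only minor point requiring attention is the verification that $\varphi(K)$ genuinely falls within the scope of Theorem~\ref{thm:countuniv}, i.e.\ that it is indeed a compact subset of $\oucosp{\yoomega}$ (not merely a bounded or separable one), which follows immediately from continuity of $\varphi$ and compactness of $K$.
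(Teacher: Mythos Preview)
Your proof is correct and follows exactly the approach the paper intends: the paper's own proof consists of the single line ``Corollary~\ref{cor:bilip} follows from \cite[Theorem]{MR0511661},'' leaving implicit precisely the composition with Theorem~\ref{thm:countuniv} that you have spelled out. Your added verification that $\varphi(K)$ is compact is the only nontrivial detail, and you handle it correctly.
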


The organization of the paper is as follows: 
In Section \ref{sec:pre}, 
we prepare basic statements
on spaces on metrics, 
and extensions of metrics. 
Section \ref{sec:zenhan}
is devoted to 
proving 
Theorems 
\ref{thm:concrete}
and 
\ref{thm:self0}. 
In Section \ref{sec:kouhan}, 
we prove 
Theorem \ref{thm:univ2k}
and Corollaries  
\ref{cor:ukappa} and 
\ref{cor:univ3}. 
Section 
\ref{sec:countable}
shows 
Theorems
\ref{thm:nonuniv}, 
\ref{thm:countuniv}, 
and 
Corollary 
\ref{cor:bilip}. 
Section 
\ref{sec:ques}
exhibits 
several questions 
on the universality of 
spaces of metrics. 

\begin{ac}
The first author 
 was supported by JSPS 
KAKENHI Grant Number 
JP24KJ0182. 

\end{ac}

\section{Preliminaries}\label{sec:pre}
In this section, 
we review basic facts on 
spaces of metrics. 
\subsection{Basic facts on spaces of metrics}
\begin{lem}\label{lem:wa}
Let 
$X$ be a
metrizable 
space. 
Then 
we have 
\[
\yocpm{X}+\met{X}\yosub \met{X}.
\] 
Namely, 
if 
$d\in \yocpm{X}$ and 
$e\in \met{X}$, 
then 
$d+e\in \met{X}$. 
Moreover, 
if both  $d$ and $e$ are bounded, 
the so is 
$d+e$. 
\end{lem}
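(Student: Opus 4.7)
The plan is to verify three things for $d+e$: it is a metric, it generates the same topology as $X$, and (when applicable) it is bounded.

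First I would observe that $d+e$ is a pseudometric because the sum of two pseudometrics is a pseudometric (all of non-negativity, symmetry, vanishing on the diagonal, and the triangle inequality pass to sums). To upgrade ``pseudometric'' to ``metric'', note that for $x\neq y$ one has $(d+e)(x,y)\geq e(x,y)>0$ since $e\in\met{X}$.

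Next I would show that the topology $\tau_{d+e}$ generated by $d+e$ coincides with the original topology $\tau_X$ of $X$. For the inclusion $\tau_{d+e}\subseteq \tau_X$, fix $x\in X$; the map $y\mapsto (d+e)(x,y)$ is the composition of the continuous inclusion $y\mapsto(x,y)\colon X\to X\times X$ with the continuous function $d+e\colon X\times X\to[0,\infty)$ (continuity of $d+e$ uses that both $d$ and $e$ are continuous as elements of $\yocpm{X}$). Hence every open ball $B_{d+e}(x,r)$ is $\tau_X$-open, so $\tau_{d+e}\subseteq\tau_X$. For the reverse inclusion, use the pointwise inequality $d+e\geq e$, which yields $B_{d+e}(x,r)\subseteq B_e(x,r)$ for every $x$ and every $r>0$; thus every $e$-open set is $d+e$-open, giving $\tau_X=\tau_e\subseteq\tau_{d+e}$. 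Combining the two inclusions gives $\tau_{d+e}=\tau_X$, i.e., $d+e\in\met{X}$.

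Finally, if $d$ and $e$ are bounded, then
\[
\sup_{x,y\in X}(d+e)(x,y)\leq \sup_{x,y\in X}d(x,y)+\sup_{x,y\in X}e(x,y)<\infty,
\]
so $d+e$ is bounded as well.

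The argument is almost entirely routine; the only mild point that requires care is pinning down the two topological inclusions precisely (continuity of $d+e$ on $X\times X$ for one direction and the comparison $d+e\geq e$ for the other). I do not anticipate any genuine obstacle.
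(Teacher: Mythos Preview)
Your proof is correct and follows essentially the same approach as the paper's: both use continuity of $d+e$ on $X\times X$ to show $\tau_{d+e}\subseteq\tau_X$, and the inequality $d+e\ge e$ to obtain the reverse inclusion. The paper packages these two facts as showing that the identity map $(X,d+e)\to(X,e)$ is an open $1$-Lipschitz bijection (hence a homeomorphism), while you compare the topologies directly, but the content is the same.
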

\begin{proof}
As is easily observed, 
$d + e$
 is a metric on 
 $X$.
Define 
a
 map 
$f\colon(X, d+e)\to (X, e)$
 by 
$f(x)=x$. 
Since 
$d+e$ is continuous on 
$X\times X$,
all open balls of 
$(X, d+e)$ 
are
also 
 open in 
 $X$. 
Thus 
the map 
$f$
 is an open map. 
By 
$e(f(x), f(y))\le (d+e)(x, y)$
for all 
$x, y\in X$, 
the map  
$f$ 
is 
$1$-Lipschitz, 
in particular, it is continuous. 
This implies that
$f$
 is a homeomorphism. 
Hence 
$d+e\in \met{X}$. 
\end{proof}

\begin{cor}\label{cor:wawa}
Let 
$\yochara{C}$
 be a 
 class of metric spaces. 
If
$X$ is a 
metrizable space, 
and  
the space $\yocpm{X}$
(resp.~$\oucpmbd{X}$)
 is
universal for 
$\yochara{C}$, 
then 
so is 
the space 
$\met{X}$
(resp.~$\oumetbd{X}$). 
\end{cor}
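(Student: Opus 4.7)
The plan is to reduce the statement to Lemma \ref{lem:wa} by translating any isometric embedding into $\yocpm{X}$ by a fixed metric in $\met{X}$. More precisely, fix once and for all a metric $e_{0}\in \met{X}$; such a metric exists because $X$ is metrizable. Given $(S, m)\in \yochara{C}$, the hypothesis furnishes an isometric embedding $\iota\colon (S, m)\to (\yocpm{X}, \metdis_{X})$. I would then define $J\colon (S, m)\to (\met{X}, \metdis_{X})$ by $J(s)=\iota(s)+e_{0}$.

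First I would verify that $J$ indeed takes values in $\met{X}$: since $\iota(s)\in \yocpm{X}$ and $e_{0}\in \met{X}$, Lemma \ref{lem:wa} gives $\iota(s)+e_{0}\in \met{X}$. Next I would check the isometry property. For $s, t\in S$ and $x, y\in X$,
\[
J(s)(x,y)-J(t)(x,y)=\iota(s)(x,y)-\iota(t)(x,y),
\]
so the $e_{0}$ term cancels and $\metdis_{X}(J(s), J(t))=\metdis_{X}(\iota(s), \iota(t))=m(s,t)$. Thus $J$ is isometric.

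For the bounded case, the only extra ingredient is the choice of a bounded compatible metric. If $e\in \met{X}$ is arbitrary, then $\min(e, 1)$ belongs to $\oumetbd{X}$, so I would take $e_{0}\in \oumetbd{X}$. Assuming $\oucpmbd{X}$ is universal for $\yochara{C}$, the embedding $\iota$ takes values in $\oucpmbd{X}$; since both $\iota(s)$ and $e_{0}$ are bounded, their sum is a bounded metric, hence $J(s)\in \oumetbd{X}$. The isometry computation is unchanged.

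The argument has essentially no obstacle: it is a direct translation device. The only point to handle with some care is confirming that the appropriate base space of compatible metrics is nonempty, which is immediate from metrizability of $X$ in the unbounded case and from the truncation construction $\min(e, 1)$ in the bounded case.
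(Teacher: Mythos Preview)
Your proof is correct and follows essentially the same approach as the paper: fix a compatible metric $e_{0}$, translate the given isometric embedding into $\yocpm{X}$ by $e_{0}$, and invoke Lemma~\ref{lem:wa} to land in $\met{X}$. Your treatment of the bounded case is slightly more explicit than the paper's, but the idea is identical.
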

\begin{proof}
We will show that 
for every 
$(Z, w)\in \yochara{C}$, 
there exists 
an isometric embedding 
$f\colon Z\to \met{X}$. 
Using the assumption, 
we obtain 
an isometric embedding 
$g\colon (Z, w)\to (\yocpm{X}, \metdis_{X})$. 
Take a metric 
$d$ 
in 
$\met{X}$, 
and define a map 
$f\colon Z\to \yocpm{X}$
by 
$f(z)=g(z)+d$. 
Lemma \ref{lem:wa}
implies that 
$f$
 is
 actually 
  a map into 
$\met{X}$. 
Since  we have 
$\metdis_{X}(f(x), f(y))=
\metdis_{X}(g(x), g(y))$, 
we conclude that 
$f$ is an isometric embedding. 
\end{proof}

\begin{cor}\label{cor:equivmetcpm}
Let 
$\yochara{C}$
 be a 
 class of metric spaces, 
 and 
$X$ 
be a 
metrizable space. 
Then 
the space 
$\yocpm{X}$
(resp.~$\oucpmbd{X}$)
is universal for 
$\yochara{C}$ if 
and only if 
so is $\met{X}$
(resp.~$\oumetbd{X}$). 
\end{cor}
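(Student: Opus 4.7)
The plan is to prove the equivalence by handling the two implications separately, using that one is already essentially done and the other is essentially tautological.

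For the forward direction, namely that universality of $\yocpm{X}$ (resp.\ $\oucpmbd{X}$) implies universality of $\met{X}$ (resp.\ $\oumetbd{X}$), I would simply cite Corollary~\ref{cor:wawa}, which gives exactly this statement. No new work is needed here: that corollary already packages the ``add a fixed metric $d\in\met{X}$'' trick relying on Lemma~\ref{lem:wa}, together with the fact that adding the same pseudometric to each point preserves distances in the sup-metric.

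For the reverse direction, I would exploit the inclusion $\met{X}\yosub \yocpm{X}$ (resp.\ $\oumetbd{X}\yosub \oucpmbd{X}$). By construction the metric $\metdis_{X}$ on the larger space restricts to $\metdis_{X}$ on the smaller space, so the inclusion map is itself an isometric embedding. Given $(Z,w)\in\yochara{C}$ and an isometric embedding $f\colon (Z,w)\to (\met{X},\metdis_{X})$ supplied by the assumed universality of $\met{X}$, composing with the inclusion yields an isometric embedding $(Z,w)\to (\yocpm{X},\metdis_{X})$. The same argument works verbatim in the bounded case.

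The hard part, if any, is nothing beyond bookkeeping: one needs to note that $\metdis_{X}$ was defined first on $\yocpm{X}$ and that, by the paper's convention stated right after the definition, its restriction to $\met{X}$ is denoted by the same symbol, so no renaming is required. Since both implications reduce to one-line arguments, I would present the proof as two short paragraphs, the first invoking Corollary~\ref{cor:wawa} and the second pointing out the inclusion $\met{X}\yosub \yocpm{X}$ (and $\oumetbd{X}\yosub \oucpmbd{X}$) as an isometric embedding.
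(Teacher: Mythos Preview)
Your proposal is correct and matches the paper's approach: the paper gives no explicit proof for this corollary, treating it as immediate from Corollary~\ref{cor:wawa} together with the trivial observation that $\met{X}\yosub\yocpm{X}$ (resp.\ $\oumetbd{X}\yosub\oucpmbd{X}$) is an isometric inclusion. Your two-paragraph outline captures exactly this.
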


Let $X$ and $Y$
 be a 
 metrizable spaces, 
 and 
 $f\colon X\to Y$
 be a
 continuous map. 
 Then
 for each 
 $d\in \yocpm{Y}$, 
  we define a pseudometric 
  $f^{*}d$ on $X$ by 
 $f^{*}d(x, y)=d(f(x), f(y))$. 
We denote by 
$f^{*}\colon \yocpm{Y}\to \yocpm{X}$
the map defined by 
$d\mapsto f^{*}d$. 
\begin{lem}\label{lem:univsurj}
Let 
$X$ 
and 
$Y$
be 
metrizable spaces, 
and let 
$f\colon X\to Y$ be a continuous map. 
Assume that 
$f$
is surjective. 
Then the following statements are true:
\begin{enumerate}[label=\textup{(\arabic*)}]
\item\label{item:surjstar}
The map 
$f^{*}\colon \yocpm{Y}\to \yocpm{X}$
is an isometric embedding
such that 
$f^{*}(\oucpmbd{Y})\yosub \oucpmbd{X}$.
\item\label{item:surjstarstar}
There exists an isometric embedding 
$h\colon \met{Y}\to \met{X}$
such that 
$h(\oumetbd{Y})\yosub \oumetbd{X}$.
\item\label{item:surjuniv}
For a class 
$\yochara{C}$
of metric spaces, 
if  $\met{Y}$ 
(resp.~$\oumetbd{Y}$)
is universal for 
 $\yochara{C}$, 
then so is 
$\met{X}$
(resp.~$\oumetbd{X}$). 
\end{enumerate}
\end{lem}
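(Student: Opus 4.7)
The plan for (1) is a direct computation from the definitions. Since $f$ is continuous, $f^{*}d$ is a continuous pseudometric on $X$, so $f^{*}$ does map $\yocpm{Y}$ into $\yocpm{X}$. For the isometry property, observe that surjectivity of $f$ gives $\{(f(x),f(y)) : x,y\in X\} = Y\times Y$, hence
\[
\metdis_{X}(f^{*}d, f^{*}e) = \sup_{x,y\in X}|d(f(x),f(y))-e(f(x),f(y))| = \sup_{u,v\in Y}|d(u,v)-e(u,v)| = \metdis_{Y}(d,e).
\]
The same surjectivity argument yields $\sup f^{*}d = \sup d$, which gives the inclusion $f^{*}(\oucpmbd{Y}) \yosub \oucpmbd{X}$.

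For (2), the point to watch is that $f^{*}$ need not send $\met{Y}$ into $\met{X}$ when $f$ fails to be injective, since $f^{*}d$ may only be a pseudometric. I would therefore follow the trick used in the proof of Corollary \ref{cor:wawa}: fix any $\delta_{0}\in \oumetbd{X}$ (such a metric exists because $\met{X}$ is nonempty and $\delta/(1+\delta)$ converts any $\delta\in\met{X}$ into a bounded one), and define $h\colon \met{Y}\to \met{X}$ by $h(d) = f^{*}d + \delta_{0}$. Lemma \ref{lem:wa} ensures $h(d)\in \met{X}$, and its ``moreover'' clause gives $h(\oumetbd{Y}) \yosub \oumetbd{X}$ because $\delta_{0}$ is bounded. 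The isometry property reduces to part (1) by cancelling the common summand $\delta_{0}$:
\[
\metdis_{X}(h(d), h(e)) = \metdis_{X}(f^{*}d, f^{*}e) = \metdis_{Y}(d,e).
\]

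Part (3) then follows immediately: given $(W,m)\in \yochara{C}$ and an isometric embedding $g\colon (W,m)\to (\met{Y},\metdis_{Y})$ supplied by the universality hypothesis, the composition $h\circ g$ is an isometric embedding into $(\met{X}, \metdis_{X})$, and the bounded variant is handled analogously using the inclusion $h(\oumetbd{Y})\yosub\oumetbd{X}$.

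There is no genuine obstacle here; the only subtle point is the one already flagged above, namely that in (2) one cannot just restrict $f^{*}$ to $\met{Y}$, so the additive shift by a fixed bounded metric $\delta_{0}\in \oumetbd{X}$ is needed to land inside $\met{X}$ while preserving the isometry.
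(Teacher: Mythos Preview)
Your proof is correct and follows essentially the same approach as the paper: the direct surjectivity computation for (1), the additive shift $h(d)=f^{*}d+\rho$ by a fixed bounded metric $\rho\in\oumetbd{X}$ for (2), and the immediate composition for (3). Your write-up is in fact slightly more detailed in justifying why $\oumetbd{X}$ is nonempty and in flagging why the shift is needed.
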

\begin{proof}
First let us verify 
\ref{item:surjstar}. 
Since 
$f$ is surjective, 
for every pair 
$d, e\in \yocpm{Y}$
we have 
\begin{align*}
&
\metdis_{X}(f^{*}d, f^{*}e)=\sup_{x, y\in X}|f^{*}d(x, y)-f^{*}e(x, y)|
\\
&=
\sup_{x, y\in X}|d(f(x), f(y))-e(f(x), f(y))|\\
&=
\sup_{u, v\in Y}|d(u, v)-e(u, v)|
=\metdis_{Y}(d, e). 
\end{align*}
This proves 
\ref{item:surjstar}. 
Next, we show 
\ref{item:surjstarstar}. 
Take 
$\rho\in \oumetbd{X}$
and define 
a map $h\colon \met{Y}\to \met{X}$ by 
$h(d)=f^{*}d+\rho$. 
Then the map 
$h$ is as desired. 
Statement 
\ref{item:surjuniv} is deduced from 
Statement 
\ref{item:surjstarstar}. 
\end{proof}

\begin{prop}\label{prop:denseisom}
Let 
$X$ 
be a metrizable space, 
and 
$H$
 be a dense subset of 
 $X$. 
 Then the map 
 $r\colon \met{X}\to \met{H}$
 defined by 
 $r(d)=d|_{H^{2}}$
 is isometric embedding
 and 
 $r(\oumetbd{X})\yosub\oumetbd{H}$. 
\end{prop}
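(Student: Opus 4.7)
The plan is to verify three things: that $r$ is well-defined as a map into $\met{H}$, that $r$ preserves the sup-metric, and that boundedness is preserved under restriction.

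For well-definedness, let $d\in\met{X}$. Then $d|_{H^{2}}$ is a metric on $H$, and a standard fact about metrizable spaces says that the subspace topology on $H$ coincides with the topology generated by $d|_{H^{2}}$. Hence $d|_{H^{2}}\in\met{H}$. The preservation of boundedness is immediate, since a supremum over $H^{2}$ is bounded above by the supremum over $X^{2}$.

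The substantive step is the isometric equality
\[
\sup_{(x,y)\in H^{2}}|d(x,y)-e(x,y)|=\sup_{(x,y)\in X^{2}}|d(x,y)-e(x,y)|
\]
for arbitrary $d,e\in\met{X}$. The inequality $(\le)$ is trivial from $H^{2}\yosub X^{2}$. For $(\ge)$, I would use that both $d$ and $e$ generate the topology of $X$, so each is continuous as a function $X\times X\to[0,\infty)$; consequently the function $(x,y)\mapsto|d(x,y)-e(x,y)|$ is continuous on $X\times X$. Since $H$ is dense in $X$, the product $H\times H$ is dense in $X\times X$, and a continuous real-valued function attains the same supremum on a dense subset as on the whole space. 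This yields the desired equality.

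There is really no serious obstacle here; the only conceptual point to be careful about is that we must use the continuity of $d$ and $e$ with respect to the \emph{common} topology of $X$, which is exactly what being elements of $\met{X}$ (rather than arbitrary metrics on the underlying set) guarantees. Once this is observed, the density of $H\times H$ in $X\times X$ finishes the argument.
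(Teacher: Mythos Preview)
Your proof is correct and follows essentially the same approach as the paper: define the continuous function $(x,y)\mapsto|d(x,y)-e(x,y)|$ on $X\times X$, observe that $H\times H$ is dense in $X\times X$, and conclude that the two suprema agree. You include a bit more detail on well-definedness and boundedness than the paper does, but the core argument is identical.
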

\begin{proof}
For every pair 
$d, e\in \met{X}$, 
define a 
map 
$s\colon X\times X\to [0, \infty)$
 by 
$s(x, y)=|d(x, y)-e(x, y)|$. 
Then it is 
continuous. 
Since $H$ is dense in 
$X$, 
we see that 
$H\times H$ is dense in 
$X\times X$. 
Thus, 
we obtain 
$\sup_{(x, y)\in X\times X}s(x, y)
=\sup_{(u, v)\in H\times H}s(u, v)$. 
This identity is equivalent to 
$\metdis_{X}(d, e)
=\metdis_{H}(r(d), r(e))$. 
This completes the proof. 
\end{proof}

\subsection{Extensions of metrics}
Before proving our main results, 
we will 
 cite  theorems on 
extensions of metrics. 
Combining the efforts by 
C. Bessaga \cite{MR1211761}, 
T. Banakh \cite{MR1811849}, 
O. Pikhurko \cite{MR1692801} 
and 
M. Zarichnyi \cite{zarichnyi1996regular}, we have the following theorem
(see \cite[Theorem 1.2]{MR2520139}). 

\begin{thm}\label{thm:extensionsugoi}
Let
 $(X, d)$ 
be a metric space and 
$A$
 be a closed subset of 
 $X$.
There is a continuous function 
$\yoextmapb \colon  \oumetbd{A} \to
\oumetbd{X}$ 
such that 
$\yoextmapb(d)|_{A^2} = d$ 
and
$\metdis_{A}(d, \rho)
=\metdis_{X}(\yoextmapb(d), \yoextmapb(\rho))$
 for all metrics 
 $d, \rho \in \oumetbd{A}$.
\end{thm}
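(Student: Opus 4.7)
The plan is to construct the extension operator $\yoextmapb$ via a Dugundji-type partition-of-unity averaging, which is the classical technique underlying the cited works. First I would fix a compatible bounded metric $\mu$ on $X$ with $\mu \le 1$. Since $A$ is closed in the metrizable space $X$, the complement $X \setminus A$ is paracompact, so it admits a locally finite open cover $\{U_i\}_{i \in I}$ satisfying the Dugundji condition $\yodiam_\mu(U_i) \le \mathrm{dist}_\mu(U_i, A)$. For each $i$ I would pick a point $a_i \in A$ with $\mu(x, a_i) \le 2\,\mathrm{dist}_\mu(x, A)$ for all $x \in U_i$, and let $\{\psi_i\}$ be a partition of unity subordinate to $\{U_i\}$, extended by $\psi_i \equiv 0$ on $A$.

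For $d \in \oumetbd{A}$ I would then define $\yoextmapb(d)(x, y)$ to equal $d(x, y)$ on $A^{2}$ and otherwise to be a weighted average
\[
\sum_{i, j \in I} \psi_i(x)\, \psi_j(y)\, d(a_i, a_j) + (\text{correction terms in } \mu),
\]
together with appropriate asymmetric formulas when exactly one of $x, y$ lies in $A$. The correction terms are chosen to enforce positivity off $A$ and the triangle inequality. The crucial observation, which automatically yields the isometric property, is that for any two metrics $d, e \in \oumetbd{A}$,
\[
|\yoextmapb(d)(x, y) - \yoextmapb(e)(x, y)| \le \sum_{i, j} \psi_i(x)\, \psi_j(y)\, |d(a_i, a_j) - e(a_i, a_j)| \le \metdis_{A}(d, e),
\]
because the partition of unity sums to $1$. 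Taking the supremum over $(x, y) \in X^{2}$ gives $\metdis_{X}(\yoextmapb(d), \yoextmapb(e)) \le \metdis_{A}(d, e)$, and the reverse inequality is immediate from $\yoextmapb(d)|_{A^{2}} = d$. Continuity of $\yoextmapb$ follows from the same $1$-Lipschitz bound; in fact $\yoextmapb$ turns out to be isometric, not merely continuous.

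The principal obstacle will be designing the correction terms so that $\yoextmapb(d)$ is simultaneously (i) a genuine metric and not merely a pseudometric, (ii) compatible with the original topology of $X$, and (iii) an exact extension of $d$ on $A^{2}$. The naive weighted-average formula above yields only a pseudometric on $X$, and positivity off $A$ together with the triangle inequality requires delicate tuning near $\partial A$; this is precisely the content of the successive refinements by Bessaga, Pikhurko, Banakh, and Zarichnyi. A fully rigorous treatment requires the detailed constructions from those papers, but the Lipschitz estimate above shows that the isometric property --- the most novel feature of the statement, and the one we will actually use in the sequel --- falls out cleanly once any such partition-of-unity extension is in place.
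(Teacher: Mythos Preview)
The paper does not actually prove this theorem; it is quoted from the literature with the attribution ``Combining the efforts by C.~Bessaga, T.~Banakh, O.~Pikhurko and M.~Zarichnyi, we have the following theorem (see [Theorem~1.2] of the cited survey).'' So there is no ``paper's own proof'' to compare against.

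That said, your sketch is faithful to the approach in those sources: the Dugundji-type locally finite cover of $X\setminus A$ with $\yodiam_\mu(U_i)\le\mathrm{dist}_\mu(U_i,A)$, the selection of anchor points $a_i\in A$, and the partition-of-unity averaging are exactly the ingredients used there. You also correctly isolate the one-line reason the operator is isometric (convex combinations of differences bounded by $\metdis_A(d,e)$, plus the trivial reverse inequality from the restriction property), and you are honest that the real work---forcing the averaged function to be a genuine metric in $\oumetbd{X}$, compatible with the topology and satisfying the triangle inequality across $\partial A$---is what the successive refinements in the cited papers accomplish. As a proof \emph{sketch} pointing to the literature, this is appropriate; as a self-contained proof it would of course be incomplete precisely at the ``correction terms'' step you flag, but the paper itself makes no attempt to fill that in either.
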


We will also make use of 
the first author's isometric extension theorem of 
metrics 
\cite[Theorem 1.1]{Ishiki2024isomope}
that treats not only  bounded metrics, but also unbounded ones. 
\begin{thm}\label{thm:main1}
Let 
$X$ 
be 
a metrizable space, 
and 
$A$
 be a closed subset of $X$. 
 Then there exists a map 
 $\yomainmap\colon \met{A}\to \met{X}$
 such that 
 \begin{enumerate}[label=\textup{(\arabic*)}]
 \item\label{item:m1ext}
for every 
$d\in \met{A}$ we have 
$\yomainmap(d)|_{A^{2}}=d$; 
\item\label{item:m1isom}
the map $\yomainmap$  is 
an isometric embedding, i.e., 
for every pair  $d, e\in \met{A}$, 
we have 
\[
\metdis_{A}(d, e)=\metdis_{X}(\yomainmap(d),
\yomainmap(e));
\]
\item\label{item:m1bounded}
$\yomainmap(\oumetbd{A})\yosub \oumetbd{X}$; 

\item\label{item:m1comp}
if $d\in \met{A}$ is complete, 
then so is 
$\yomainmap(d)$. 
 \end{enumerate}
 Moreover, 
there exists an isometric operator 
extending pseudometrics 
$\yomainmapf\colon \yocpm{A}\to \yocpm{X}$
such that 
$\yomainmapf|_{\met{A}}=\yomainmap$. 
\end{thm}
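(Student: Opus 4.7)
My plan is to construct $\yomainmap$ by an explicit Bessaga-type formula whose coefficients depend only on the geometry of the pair $(X, A)$, not on the metric $d$, so that the sup-metric isometry reduces to a pointwise convexity bound, and the remaining properties (extension, triangle inequality, preservation of boundedness and completeness, and passage from pseudometrics to metrics) become routine local checks on a locally finite cover of $X \setminus A$.

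First I would fix a bounded compatible metric $\rho$ on $X$ and, using paracompactness of the open set $X \setminus A$, choose a locally finite open cover $\{U_\lambda\}_{\lambda \in \Lambda}$ of $X \setminus A$ together with anchor points $a_\lambda \in A$ and a subordinate partition of unity $\{\psi_\lambda\}$, subject to Dugundji-style geometric constraints such as $\mathrm{diam}_\rho(U_\lambda) \le \rho(U_\lambda, A)$ and $\rho(a_\lambda, U_\lambda) \le 3\rho(U_\lambda, A)$. These are the standard choices behind the bounded extension result of Theorem \ref{thm:extensionsugoi}. For a pseudometric $d \in \yocpm{A}$, I would then define $\yomainmapf(d)(x, y) = d(x, y)$ when $x, y \in A$, and otherwise
\[
\yomainmapf(d)(x, y) = \sum_{\lambda, \mu \in \Lambda} \psi_\lambda(x) \psi_\mu(y)\, d(a_\lambda, a_\mu) + c(x, y),
\]
with appropriate analogous expressions on mixed pairs $x \in A$, $y \in X \setminus A$ (replacing one sum by the single-term value at $x$), and with a $d$-independent correction $c(x, y) \geq 0$ that vanishes on $A^2$ and makes the result a genuine metric on $X$; a natural candidate is a multiple of $\rho(x, y)$ damped near $A$, or a sum of the form $\sum_\lambda \psi_\lambda(x) \rho(x, a_\lambda) + \sum_\mu \psi_\mu(y) \rho(y, a_\mu)$. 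Setting $\yomainmap := \yomainmapf|_{\met{A}}$ then yields the object sought.

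The sup-metric isometry \ref{item:m1isom} is the design feature: $\yomainmapf(d)(x, y) - \yomainmapf(e)(x, y)$ is, for each fixed $(x, y)$, the same convex combination of differences $d(a_\lambda, a_\mu) - e(a_\lambda, a_\mu)$, each of absolute value at most $\metdis_A(d, e)$, which delivers the ``$\le$'' direction; the ``$\ge$'' direction is trivial by restricting to $A^2$. Property \ref{item:m1ext} is built into the formula, while \ref{item:m1bounded} is immediate since a convex combination is bounded by $\sup_{A^2} d$ and $c$ is bounded. For completeness \ref{item:m1comp}, I would argue that a $\yomainmap(d)$-Cauchy sequence in $X$ either is eventually trapped in a single patch $U_\lambda$, hence $\rho$-Cauchy and $\yomainmap(d)$-convergent, or admits a subsequence whose $\rho$-distance to $A$ tends to $0$, in which case the associated anchor points $a_{\lambda(n)}$ form a $d$-Cauchy sequence that converges in $A$ by closedness of $A$ and completeness of $d$, and the original sequence inherits the limit. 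I expect the main obstacle to be the choice of the correction term $c$: it must ensure that $\yomainmapf(d)$ satisfies the triangle inequality for triples $(x, y, z)$ straddling several patches $U_\lambda$, \emph{without} breaking the linearity in $d$ that is the source of the sup-metric isometry. This is precisely where the argument must genuinely extend the Bessaga--Banakh--Pikhurko--Zarichnyi construction from the bounded regime of Theorem \ref{thm:extensionsugoi} to the unbounded setting.
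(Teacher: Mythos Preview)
The paper does not prove this theorem at all: it is quoted verbatim as \cite[Theorem 1.1]{Ishiki2024isomope} and used as a black box throughout (see the sentence immediately preceding the statement). So there is no in-paper argument to compare your proposal against.

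That said, your outline is headed in the same direction as the construction in the cited reference, and your identification of the crux is accurate: the entire difficulty is concentrated in producing a $d$-independent correction $c$ that (i) vanishes on $A^2$, (ii) is large enough off $A^2$ to force the triangle inequality for \emph{every} $d\in\met{A}$ simultaneously (including unbounded ones), and (iii) still yields a metric generating the given topology of $X$. Your two candidate choices for $c$ are not yet adequate as written. A multiple of $\rho(x,y)$ ``damped near $A$'' cannot control triples $(a,x,y)$ with $a\in A$ and $x,y$ close to $A$ but anchored at points $a_\lambda,a_\mu$ with $d(a_\lambda,a_\mu)$ arbitrarily large; and the Dugundji-type term $\sum_\lambda\psi_\lambda(x)\rho(x,a_\lambda)+\sum_\mu\psi_\mu(y)\rho(y,a_\mu)$, being bounded by a fixed constant independent of $d$, likewise cannot absorb unbounded $d$-fluctuations across patches. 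The point of the cited paper is precisely to manufacture a correction whose size near a point $x\in X\setminus A$ scales with the $d$-diameter of the anchor set visible from $x$, while remaining $d$-independent in the sense that the \emph{difference} $\yomainmapf(d)-\yomainmapf(e)$ still collapses to a convex combination of $d-e$ values; this requires more than a single additive term and is the step your sketch defers. Your treatment of \ref{item:m1isom}, \ref{item:m1ext}, \ref{item:m1bounded}, and the completeness dichotomy in \ref{item:m1comp} is otherwise sound in outline.
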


\subsection{Embeddings into linear  spaces}

\begin{lem}\label{lem:gfrechet}
Let 
$\kappa$
 be an infinite cardinal. 
The space 
$(\ouellsp{\kappa}, \ouelldis{*})$ 
is universal for 
 $\oucwei{\kappa}$. 
\end{lem}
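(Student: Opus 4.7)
The plan is to carry out a direct generalization of the classical Fr\'{e}chet embedding from the separable case $\kappa = \aleph_{0}$ to an arbitrary infinite cardinal $\kappa$. Given $(M, d) \in \oucwei{\kappa}$ (the case $M = \emptyset$ being trivial), fix a basepoint $x_{0} \in M$. Since $M$ is metrizable, its weight coincides with its density character, so $M$ admits a dense subset of cardinality at most $\kappa$; enumerate such a subset, with repetitions if necessary, as $\{x_{\alpha}\}_{\alpha \in \kappa}$.

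Next I would define $f \colon M \to \ouellsp{\kappa}$ by
\[
f(x)(\alpha) = d(x, x_{\alpha}) - d(x_{0}, x_{\alpha}).
\]
The triangle inequality gives $|d(x, x_{\alpha}) - d(x_{0}, x_{\alpha})| \le d(x, x_{0})$ for every $\alpha$, so $f(x)$ lies in $\ouellsp{\kappa}$ with sup-norm bounded by $d(x, x_{0})$, confirming that $f$ takes values in the correct space.

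It remains to verify that $f$ is an isometry. For the easy direction, the identity $|f(x)(\alpha) - f(y)(\alpha)| = |d(x, x_{\alpha}) - d(y, x_{\alpha})|$ combined with the triangle inequality yields $\ouelldis{f(x) - f(y)} \le d(x, y)$ uniformly in $\alpha$. For the matching lower bound I would invoke density: given $\varepsilon > 0$, pick $\alpha \in \kappa$ with $d(x, x_{\alpha}) < \varepsilon$, so that the reverse triangle inequality applied to the same identity gives $|f(x)(\alpha) - f(y)(\alpha)| \ge d(x, y) - 2\varepsilon$, hence $\ouelldis{f(x) - f(y)} \ge d(x, y)$.

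There is essentially no main obstacle here: the argument is a textbook Fr\'{e}chet-type construction in which the parameter $\kappa$ enters only through the cardinality of the dense indexing set. The single bookkeeping point worth flagging is the equality of weight and density character for metrizable spaces, which is what guarantees that $\ouellsp{\kappa}$ has enough coordinates to accommodate a dense witnessing set for the metric $d$.
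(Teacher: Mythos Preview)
Your proposal is correct and follows essentially the same approach as the paper: both carry out the Fr\'{e}chet--Kuratowski embedding $x \mapsto (d(x, q_{\alpha}) - d(\text{basepoint}, q_{\alpha}))_{\alpha < \kappa}$ against a dense set of cardinality $\le \kappa$. You supply more detail (the weight\,=\,density observation, the repetition device, and the explicit isometry verification), but the underlying construction is identical.
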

\begin{proof}
The proof is the same 
as that of 
the 
Fr\'{e}chet 
embedding 
theorem
(see 
\cite[Proposition 1.17]{MR3114782}). 
For $(X, d)\in \oucwei{\kappa}$, 
take a dense subset 
$\{q_{\alpha}\}_{\alpha<\kappa}$, 
and 
define 
$f\colon X\to\ouellsp{\kappa}$
by 
$x\mapsto (d(x, q_{\alpha})-d(q_{\alpha}, q_{0}))_{\alpha<\kappa}$. 
Then $f$ is an isometric embedding
from $(X, d)$ into $(\ouellsp{\kappa}, \ouelldis{*})$. 
\end{proof}

\begin{lem}\label{lem:finlem00}
Let 
$\kappa$
 be an infinite cardinal. 
For every 
completely 
metrizable  space 
$Z$ of 
weight
 $\kappa$, 
there exists a 
topological 
embedding 
$f\colon Z\to \ouelltwosp{\kappa}$
such that 
$f(Z)$
is closed in 
$\ouelltwosp{\kappa}$. 
\end{lem}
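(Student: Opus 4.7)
The plan is to combine an isometric embedding of $(Z, d)$ into a Banach space of weight $\kappa$ with Toru\'{n}czyk's topological classification of Fr\'{e}chet spaces. First I would fix a complete metric $d$ on $Z$ (which exists because $Z$ is completely metrizable); by replacing $d$ with $d/(1+d)$ if necessary, I may assume $d \le 1$, and this preserves both completeness and the topology.

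Next I would embed $(Z, d)$ isometrically as a closed subset of a Banach space of weight $\kappa$. A convenient choice is the Arens--Eells (Lipschitz-free) space $\mathrm{AE}(Z)$: its density character equals that of $Z$ (hence weight $\kappa$), and the canonical map $\delta \colon Z \to \mathrm{AE}(Z)$, $z \mapsto \delta_{z}$, is an isometry. Because $(Z, d)$ is complete and $\delta$ is isometric, $\delta(Z)$ is automatically closed in $\mathrm{AE}(Z)$. Then I would invoke Toru\'{n}czyk's theorem that every infinite-dimensional Fr\'{e}chet space of weight $\kappa$ is homeomorphic to $\ouelltwosp{\kappa}$; composing the resulting homeomorphism $h \colon \mathrm{AE}(Z) \to \ouelltwosp{\kappa}$ with $\delta$ yields the desired closed topological embedding $f := h \circ \delta \colon Z \to \ouelltwosp{\kappa}$. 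The trivial low-dimensional degenerate cases, when $Z$ is finite, can be handled separately.

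A more hands-on alternative proceeds via the Nagata--Smirnov metrization theorem: fix a $\sigma$-discrete base $\yocover{B} = \bigsqcup_{n \in \nn} \yocover{B}_{n}$ of $Z$ with $|\yocover{B}| \le \kappa$ and $d$-diameter of each $B \in \yocover{B}_{n}$ bounded by $2^{-n}$, set $\phi_{B}(z) := \mathrm{dist}_{d}(z, Z \setminus B)$, and put $f(z) := (\phi_{B}(z))_{B} \in \ouelltwosp{\yocover{B}}$. The $\sigma$-discreteness of $\yocover{B}$ forces $\sum_{B} \phi_{B}(z)^{2} \le \sum_{n} 4^{-n} < \infty$, and the base property yields by standard arguments that $f$ is a topological embedding into $\ouelltwosp{\kappa}$. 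The main obstacle in this direct route is verifying that $f(Z)$ is closed: the bump-function coordinates do not obviously detect Cauchy behaviour of a sequence in $(Z, d)$, so one must either enrich the construction with additional metric-preserving coordinates or argue via a careful tracking of which element of $\yocover{B}_{n}$ contains $z_{k}$ at each scale $n$, using completeness of $d$ to upgrade the stabilisation of supports to convergence in $Z$. The Arens--Eells/Toru\'{n}czyk route sidesteps this difficulty entirely by transporting closedness through the isometric step.
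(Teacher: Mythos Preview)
The paper does not actually prove this lemma; it simply cites Theorem~6.2.4 of Sakai's textbook \emph{Geometric Aspects of General Topology}. So there is no argument to compare with directly, but it is worth noting that your Arens--Eells/Toru\'{n}czyk route is correct yet invokes considerably heavier machinery than the standard textbook proof: Toru\'{n}czyk's classification of infinite-dimensional Fr\'{e}chet spaces is a deep result, whereas the lemma itself is classical and elementary. (Incidentally, your remark about the ``finite $Z$'' degenerate case is vacuous, since weight $\kappa$ with $\kappa$ infinite forces $Z$ to be infinite.)

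Your second, hands-on approach is much closer in spirit to the usual textbook argument, and the obstacle you identify---closedness of the image---is exactly the point that requires care. The standard resolution is not to ``track supports'' directly but rather to use the Lavrentiev/Kuratowski trick: once you have any topological embedding $g\colon Z\to \ouelltwosp{\kappa}$, the image $g(Z)$ is a $G_{\delta}$ in its closure (because $Z$ is completely metrizable), say $g(Z)=\bigcap_{n}U_{n}$ with $U_{n}$ open in $\overline{g(Z)}$; then $z\mapsto \bigl(g(z),\,(\,1/\mathrm{dist}(g(z),\overline{g(Z)}\setminus U_{n})\,)_{n}\bigr)$ is a closed embedding of $Z$ into $\ouelltwosp{\kappa}\times\ouelltwo\cong\ouelltwosp{\kappa}$. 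This is essentially what the cited reference does, and it avoids both the subtle support-tracking you allude to and the heavy Toru\'{n}czyk hammer.
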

\begin{proof}
See \cite[Theorem 6.2.4]{MR3099433}. 
\end{proof}

\section{Spaces with abundant subspaces}\label{sec:zenhan}

To prove our main results, 
we will show the following lemma:

\begin{lem}\label{isometric}
Let 
$X$ 
be a metrizable space. 
Then 
for every metric 
$d\in \met{X}$,  
there is an isometric embedding form 
$(X, d)$ 
into 
the space 
$\left(\met{X \oplus \ouoneptsp},\metdis_{X \oplus \ouoneptsp}\right)$.
Moreover, 
if
 $d$ is bounded, 
then 
$(X, d)$
 can
be isometrically embedded into 
$\oumetbd{X\oplus \ouoneptsp}$. 
\end{lem}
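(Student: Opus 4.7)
The plan is to define, for each $x \in X$, a metric $\rho_{x}$ on $X \oplus \ouoneptsp$ that restricts to $d$ on $X$ and encodes $x$ via the distances to the adjoined point $\ouonept$. Concretely, I would set $\rho_{x}(y, z) = d(y, z)$ when $y, z \in X$, put $\rho_{x}(y, \ouonept) = \rho_{x}(\ouonept, y) = d(y, x) + 1$ for $y \in X$, and $\rho_{x}(\ouonept, \ouonept) = 0$. The candidate embedding is then $J(x) := \rho_{x}$.

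First I would verify that $\rho_{x}$ is indeed a metric. Symmetry and positivity are immediate, since $d(y, x) + 1 \ge 1 > 0$ for every $y \in X$. The triangle inequality splits into cases according to how many of the three arguments equal $\ouonept$; the nontrivial instances reduce either to the triangle inequality for $d$ or to the observation that $y \mapsto d(y, x) + 1$ is $1$-Lipschitz on $(X, d)$. Next I would check that $\rho_{x}$ generates the topology of $X \oplus \ouoneptsp$: its restriction to $X$ is $d$, which is compatible with the given topology on $X$; and since $\rho_{x}(y, \ouonept) \ge 1$ for every $y \in X$, the point $\ouonept$ is isolated, so $X$ becomes an open subset and we recover the topological sum topology. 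If $d$ is bounded with diameter $D$, then the values of $\rho_{x}$ lie in $[0, D + 1]$, so $\rho_{x} \in \oumetbd{X \oplus \ouoneptsp}$.

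Finally, to see that $J$ is isometric, I would observe that $\rho_{x}$ and $\rho_{x'}$ agree on $X \times X$ and at $(\ouonept, \ouonept)$, so the sup-metric computation collapses to
\[
\metdis_{X \oplus \ouoneptsp}(\rho_{x}, \rho_{x'}) = \sup_{y \in X} \bigl| d(y, x) - d(y, x') \bigr|.
\]
The triangle inequality for $d$ yields $|d(y, x) - d(y, x')| \le d(x, x')$, and equality is attained at $y = x$, so the supremum is exactly $d(x, x')$. I do not foresee a substantive obstacle; the only mildly delicate step is the case analysis establishing the triangle inequality for $\rho_{x}$, which is entirely routine once the $1$-Lipschitz property of $d(\,\cdot\,, x)$ is recorded.
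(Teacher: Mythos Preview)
Your proposal is correct and follows exactly the same construction as the paper: defining $\rho_x$ by $\rho_x|_{X^2}=d$ and $\rho_x(y,\ouonept)=d(y,x)+1$, then checking that $x\mapsto\rho_x$ is an isometry via the Kuratowski-style computation $\sup_{y}|d(y,x)-d(y,x')|=d(x,x')$. If anything, you are slightly more thorough, since you explicitly verify the triangle inequality and that $\rho_x$ generates the sum topology, points the paper leaves implicit.
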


\begin{proof}
For each 
$u \in X$, 
we shall define 
$d_{u} \in \met{X \oplus \ouoneptsp}$ 
as follows:
 \[
 d_{u}(x, y)=
 \begin{cases}
 d(x, y) & \text{if $x, y\in X$;}\\
 d(x, u)+1 & \text{ if $x\in X$ and $y=\ouonept$;}\\
 d(y, u)+1 & \text{if $x=\ouonept$ and $y\in X$;}\\
 0 & \text{if $x=y=\ouonept$}
 \end{cases}
 \]
Define  
$I\colon X\to \met{X \oplus \ouoneptsp}$
 by 
$u \mapsto d_{u}$. 
Let us verify that 
the map 
$I$
 is an isometric embedding.
Fix any 
$u, v \in X$ 
and any 
$x, y \in X\oplus \ouoneptsp$.
By the definition of 
$d_{u}$ 
and 
$d_{v}$,
 if 
 $(x, y)\in X^{2}$ 
 or 
  $x = y = \ouonept$,
 then 
 $|d_{u}(x, y) - d_{v}(x, y)| = 0$.
In the case where 
$x \in X$ and
 $y = \ouonept$,
 we obtain 
 \[
 |d_{u}(x, y) - d_{v}(x, y)| = 
 |(d(x, u) + 1) - (d(x, v) + 1)| \leq 
 d(u, v).
 \]
Similarly, when 
$x = \ouonept$ 
and 
$y \in X$,
we also obtain 
 $|d_{u}(x, y) - d_{v}(x,y)| \leq d(u, v)$.
Hence 
$\metdis_{X\oplus \ouoneptsp}(d_{u},d_{v}) 
\leq d(u, v)$.
Moreover, we also  have 
\[
|d_{u}(u, \ouonept) - d_{v}(u, \ouonept)| = 
|(d(u, u) + 1) - (d(u, v) + 1)| = d(u, v). 
\]
This  implies that 
 $\metdis_{X\oplus \ouoneptsp}(d_{u},d_{v}) = 
 d(u, v)$.
Therefore, 
the map
$I$ 
is an isometric embedding.
If $d$ is bounded, 
then each 
$I(z)$ belongs to 
$\oumetbd{X\oplus \ouoneptsp}$. 
This finishes the proof. 
\end{proof}

Now we shall prove Theorems \ref{thm:univ0}--\ref{thm:self0}.

\begin{proof}[Proof of Theorem \ref{thm:univ0}]
Take  
an extension  
$(Y, D)$ 
of 
$X$
stated in 
\ref{item:univ0main}. 
It follows from 
Lemma
\ref{isometric}
 that there exists an isometric embedding 
$I \colon (Y, D) \to 
(\met{Y \oplus \ouoneptsp}, \metdis_{Y \oplus \ouoneptsp})$.
By virtue of 
Theorem \ref{thm:main1}, 
we can obtain a 
continuous operator  
$\yoextmap \colon  \met{Y \oplus \ouoneptsp} \to \met{Z}$
such that 
\begin{enumerate}
\item
 for any 
$d \in \met{Y \oplus \ouoneptsp}$, 
we have 
$\yoextmap(d)|_{(Y \oplus \ouoneptsp)^{2}} = d$; 
\item   
for any 
$d, \rho \in \met{Y \oplus \ouoneptsp}$, 
we have 
\[
\metdis_{Y\oplus \ouoneptsp}(d, \rho)
=\metdis_{Z}(\yoextmap(d), \yoextmap(\rho)).
\] 
\end{enumerate}
Write
$\iota\colon (X, d)\to (Y, D)$
as the  inclusion map. 
Thus, 
the composition 
$J=\yoextmap \circ I \circ \iota
\colon  
(X, d) \to (\met{Z}, \metdis_{Z})$
 is an isometric embedding.
This completes the proof of 
Theorem \ref{thm:universal1}. 
\end{proof}

\begin{proof}[Proof of  Theorem \ref{thm:universal1}]
Theorem \ref{thm:universal1}
is a direct consequence of 
Theorem 
\ref{thm:univ0}. 
\end{proof}

\begin{proof}[Proof of Theorem \ref{thm:concrete}]
First, we show the universality of 
$\met{\ouelltwosp{\kappa}}$. 
Take $(X, d)\in \oucwei{\kappa}$. 
Let 
$(Y, D)$
be the completion of 
$(X, d)$. 
Notice that 
the space 
$Y\oplus \ouoneptsp$
is completely metrizable. 
Due to 
Lemma
\ref{lem:finlem00}, 
there exists a topological embedding
 $f\colon Y\oplus \ouoneptsp\to \ouelltwosp{\kappa}$
 such that 
 $f(Y\oplus \ouoneptsp)$ is closed in  
 $\ouelltwosp{\kappa}$. 
Then 
 the condition 
\ref{item:univ1main} in
Theorem  \ref{thm:universal1} 
is fulfilled. 
Thus, 
Theorem 
\ref{thm:universal1} 
implies that 
the space 
$(\met{\ouelltwosp{\kappa}}, \metdis_{\ouelltwosp{\kappa}})$
is universal 
for $\oucwei{\kappa}$
(resp.~ the space $(\oumetbd{\ouelltwosp{\kappa}}, \metdis_{\ouelltwosp{\kappa}})$
is universal for  
$\oucbdd{\oucwei{\kappa}}$). 
We next  consider
$\yoirrsp{\kappa}$. 
Employing 
Engelking's  result
\cite[Theorem]{MR239571}
stating that  that  
every completely  metrizable space
of weight $\kappa$
 is a closed image of 
 $\yoirrsp{\kappa}$, 
 we obtain 
 a closed surjective map
 $h\colon \yoirrsp{\kappa}\to \ouelltwosp{\kappa}$. 
 Then 
Lemma 
\ref{lem:univsurj}
shows that 
$\met{\ouelltwosp{\kappa}}$ 
is 
a metric subspace of 
$\met{\yoirrsp{\kappa}}$
and 
$\oumetbd{\ouelltwosp{\kappa}}$ 
is 
a metric subspace of 
$\oumetbd{\yoirrsp{\kappa}}$. 
Hence 
$(\met{\yoirrsp{\kappa}}, \metdis_{\yoirrsp{\kappa}})$
 is also universal 
 for 
 $\oucwei{\kappa}$
 and 
 $(\oumetbd{\yoirrsp{\kappa}}, \metdis_{\yoirrsp{\kappa}})$
 is  universal 
 for 
 $\oucbdd{\oucwei{\kappa}}$. 
 
 Let us prove 
 Statement 
 \ref{item:conccube}. 
 Similarly to the above proof, 
 take 
 $(X, d)\in   \yochara{T}$. 
Let 
$(Y, D)$
 be the completion of 
 $(X, d)$. 
 Then 
 $(Y, D)$
  is compact. 
 According  to 
 the  method of 
 the proof of 
the Urysohn metrization theorem 
(see, for example,  
\cite[Theorem 23.1]{MR2048350}
and \cite[Theorem 34.1]{MR3728284}), 
there exists a 
topological embedding
$f\colon Y\oplus \ouoneptsp\to \oucube$. 
Since 
$Y$ is compact, 
the embedding  $f$ is  a closed map. 
This means that 
 the condition 
\ref{item:univ1main} in
Theorem  \ref{thm:universal1} 
is fulfilled. 
Using  Theorem 
\ref{thm:universal1} again, 
we see that 
$\met{\oucube}, \metdis_{\oucube})$ is 
universal for 
$\yochara{T}$.

Now we shall show 
Statement \ref{item:concgamma}. 
Take a 
continuous surjection 
$f\colon Z\to \oucube$. 
Thus, 
Lemma 
\ref{lem:univsurj} 
shows that 
there exists an isometric embeddings 
from $\oumetbd{\oucube}$
into 
$\oumetbd{Z}$. 
Thus, 
the space 
$(\oumetbd{Z}, \metdis_{Z})$
is 
universal for 
$\yochara{T}$. 
In particular,
since 
there exists
a
continuous surjection
$\alpha\colon \yocantorc\to \oucube$
(see 
\cite[Theorem 30.7]{MR2048350}), 
we see that 
$(\met{\yocantorc}, \metdis_{\yocantorc})$
 is universal  for 
 $\yochara{T}$.

Next let us show 
Statement 
\ref{item:concpolish}. 
Take an arbitrary uncountable 
Polish space 
$X$. 
Then there exists an topological embedding 
$\iota \colon \yocantorc\to X$
(the Cantor--Bendixson theorem, 
see
\cite[Corollary 6.5]{MR1321597}). 
Thus by
Theorem
\ref{thm:extensionsugoi} 
or 
Theorem \ref{thm:main1}, 
we obtain an isometric embedding 
from $\oumetbd{\yocantorc}$
into 
$\oumetbd{X}$. 
Combining this isometric embedding 
and 
Statement 
\ref{item:concgamma}
with respect to 
$\yocantorc$
in  Theorem \ref{thm:concrete}, 
we conclude that 
$(\oumetbd{X}, \metdis_{X})$
is universal for 
$\yochara{T}$. 
\end{proof}

\begin{proof}[Proof  of Theorem \ref{thm:self0}]
We first assume that 
\ref{item:self0uniform}
 is satisfied. 
 Take  a uniform homeomorphism 
$f\colon (S, d|_{S^{2}})\to (X, d)$. 
Since $f$ and $f^{-1}$ is uniformly continuous, 
there exists an extension space 
$(Y, D)$ of $(X, d)$ and a homeomorphism 
$F\colon Y\to \cl_{X}(S)$
such that
$D|_{X^{2}}=d$
and 
$F|_{X}=f$. 
By the assumption
that 
$\cl_{X}(S)$ is a proper subset of 
$X$, 
we can take 
$p\in X\setminus \cl_{X}(S)$. 
Since 
$Y\oplus \ouoneptsp$ is homeomorphic to 
$\cl_{X}(S)\sqcup\{p\}$, 
by 
Theorems \ref{thm:univ0}, 
there exists an isometric embedding 
$(X, d)\to \met{\cl_{X}(S)\sqcup\{p\}}$. 
Due to 
Theorem  \ref{thm:main1}, 
$ \met{\cl_{X}(S)\sqcup\{p\}}$ 
can be isometrically embedded into 
$\met{X}$. 
As a result, 
we also obtain 
an isometric embedding 
$(X, d)$ into 
$\met{X}$.

To prove the case of 
\ref{item:self0closed}, 
take any $p \in X \setminus S$.
Since $X \bigoplus  \ouoneptsp$ is homeomorphic to $S \cup \{p\}$,
due to 
Theorems \ref{thm:univ0} and  \ref{thm:main1},
there is an isometric embedding from
 $(X,d)$ into
 $\met{X}$. 
This finishes the 
proof. 
\end{proof}

\section{The countable discrete spaces}\label{sec:kouhan}
The purpose of 
this section is 
to prove 
Theorem
\ref{thm:univ2k}
and 
Corollaries 
\ref{cor:ukappa}
and 
\ref{cor:univ3}. 

\begin{lem}\label{lem:www}
Let $X$ be a set. 
If a map 
 $w\colon X\times X \to [0, \infty)$ 
 satisfies:
 \begin{enumerate}[label=\textup{(\arabic*)}]
 \item 
 for 
 every 
 pair 
 $x, y\in X$, 
 we have 
 $d(x, y)=0$ if and only if 
 $x=y$;
 \item 
 for every 
 pair 
 $x, y\in X$, 
 we have 
 $w(x, y)=w(y, x)$; 
 \item\label{cond:www}
 there exists 
 $L\in (0, \infty)$ 
 such that 
 for every 
 distinct pair 
 $x, y\in X$, 
 we have
 $w(x, y)\in [L, 2L]$, 
 \end{enumerate}
 then 
 $w$ 
 is a metric and 
 it generates the discrete topology on 
 $X$. 
\end{lem}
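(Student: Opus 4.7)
The plan is straightforward: the hypotheses already supply positivity (condition (1)) and symmetry (condition (2)), so the only non-trivial metric axiom to verify is the triangle inequality, after which the discreteness of the induced topology follows from condition (3) with essentially no work.

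For the triangle inequality $w(x,z)\le w(x,y)+w(y,z)$, I would argue by case analysis on whether the three points $x,y,z$ are distinct. If $x=z$, then the left side is $0$ and the inequality is trivial. If $y$ coincides with $x$ or with $z$, then the right side equals $w(x,z)$ plus a non-negative term, and the inequality is again immediate. In the remaining case where $x,y,z$ are pairwise distinct, condition \ref{cond:www} gives $w(x,z)\le 2L$ while $w(x,y)+w(y,z)\ge L+L=2L$, and we are done. This case analysis is the entire content of the triangle inequality, and it is the only place condition \ref{cond:www} is actually used in that portion.

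For the topology statement, I would observe that for every $x\in X$ and every $r$ with $0<r\le L$, the open ball $\{y\in X\mid w(x,y)<r\}$ equals $\{x\}$: indeed, by condition \ref{cond:www} any $y\neq x$ satisfies $w(x,y)\ge L\ge r$. Consequently every singleton is open in the topology induced by $w$, so that topology is discrete.

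I do not anticipate a genuine obstacle here: the only mild care needed is to make sure the triangle-inequality case analysis covers the case $y=x$ or $y=z$ (where condition \ref{cond:www} does not apply directly because one of the pairs is not distinct), which is handled trivially by reflexivity and non-negativity. Everything else is bookkeeping.
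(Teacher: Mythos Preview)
Your argument is correct and follows the same approach as the paper: both bound $w(x,z)\le 2L$ from above and $w(x,y)+w(y,z)\ge L+L$ from below via condition~\ref{cond:www}. You are in fact more careful than the paper, which omits the degenerate-triple case analysis and leaves the discreteness claim implicit.
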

\begin{proof}
It suffices to show that 
$w$ 
satisfies the triangle inequality. 
For every 
triple 
$x, y, z\in X$, 
the condition \ref{cond:www}
implies that 
\[
w(x, y)\le 2L=L+L\le w(x, z)+w(z, y). 
\]
Thus, 
the map 
$w$ satisfies the 
triangle inequality. 
\end{proof}

For a set 
$S$, 
we denote by 
$[S]^{2}$
 the set 
 $\{\, \{p, q\}\mid p, q\in S, p\neq q\, \}$. 
 Namely, 
 $[S]^{2}$
 is the set of all subsets of 
 $S$ 
consisting of exact two points. 
 Note that if $S$ is infinite, 
 then 
 $\card(S)=\card([S]^{2})$, 
 where 
 ``$\card$''
 means the cadinality. 
 Let us provide the 
 proof of 
 Theorem 
 \ref{thm:univ2k}. 
\begin{proof}[Proof of Theorem \ref{thm:univ2k}]
For each 
$t\in [0, \infty)$, 
we define a map 
$R_{t}\colon \rr\to [-t, t]$
by
\[
R_{t}(x)=
\begin{cases}
t & \text{if $t\le x$;}\\
x & \text{if $x\in [-t, t]$;}\\
-t & \text{if $x\le -t$.}
\end{cases}
\]
Note that 
$R_{t}$
 is
  $1$-Lipschitz,
   and 
   $R_{t}$ is the identity on 
   $[-t, t]$. 
   Hence,  
if 
$x, y\in [-t, t]$, 
then we have 
$|R_{t}(x)-R_{t}(y)|=|x-y|$.

Take a family 
$\{S_{i}\}_{i\in \zz_{\ge 0}}$ 
consisting 
of mutually disjoint subspaces of 
$\kappa$
such that 
$\card(S_{i})=\kappa$
 and 
$\kappa=\bigcup_{i\in \zz_{\ge 0}}S_{i}$. 
Using 
$\card([S_{i}]^{2})=\kappa$, 
for each 
$i\in \zz_{\ge 0}$, 
we also take a surjective 
map 
$\tau_{i}\colon [S_{i}]^{2}\to \kappa$. 
For each 
$i\in \zz_{\ge 0}$, 
we define 
a map 
$F_{i}\colon \ouellsp{\kappa}\to \ouellsp{[S_{i}]^{2}}$
by 
\[
F_{i}(a)(\{p, q\})
=R_{2^{i}}\circ a(\tau_{i}(\{p, q\}))+3\cdot 2^{i}. 
\]
Note that for every $i\in \zz_{\ge 0}$, 
 we have 
$F_{i}(a)(\{p, q\}) \in \left[2^{i+1}, 2\cdot 2^{i+1}\right]$
for all 
$a\in \ouellsp{\kappa}$
and 
$\{p, q\}\in [S_{i}]^{2}$. 
We next 
define  a map 
$G_{i}\colon \ouellsp{\kappa}\to \met{S_{i}}$
by 
\[
G_{i}(a)(p, q)=
\begin{cases}
F_{i}(a)(\{p, q\}) & \text{if $p\neq q$;}\\
0 & \text{if $p=q$.}
\end{cases}
\]
Since 
$F_{i}(a)(\{p, q\}) \in \left[2^{i+1}, 2\cdot 2^{i+1}\right]$, 
Lemma
\ref{lem:www} 
($L=2^{i+1}$)
establishes
  that 
the function 
$G_{i}(a)$ 
actually 
belongs to
 $\met{S_{i}}$. 

Now we define  a map 
$H\colon \ouellsp{\kappa}\to \met{\kappa}$ by 
\[
H(a)(p, q)=
\begin{cases}
G_{i}(a)(p, q) & \text{if $p, q\in S_{i}$;}\\
\max\{2^{i+2}, 2^{j+2}\} & \text{if $i\neq j$ and 
$p\in S_{i}$ and $q\in S_{j}$.}
\end{cases}
\]
Note that 
each 
$H(a)$ 
becomes a
metric and 
it
belongs to 
$\met{\kappa}$
since 
the diameter 
$\yodiam_{G_{i}(a)}(S_{i})\le 2^{i+2}$
for all $a\in \ouellsp{\kappa}$ and 
$i\in \zz_{\ge 0}$. 
Now let us verify that 
$H$ is an isometric embedding, 
i.e., 
we will show 
$\metdis_{\kappa}(H(a), H(b))
=\ouelldis{ a-b }$ 
for 
 arbitrary maps
$a, b\in  \ouellsp{\kappa}$. 
Since 
$H(a)(p, q)=H(b)(p, q)$ 
for all $p, q$ such that $p\in S_{i}$
and $q\in S_{j}$ for distinct $i$ and $j$, 
we have 
\begin{align*}
&\metdis_{\kappa}(H(a), H(b))
=\sup_{i\in \zz_{\ge 0}}
\sup_{p, q\in S_{i}}
|H(a)(p, q)-H(b)(p, q)|\\
&=
\sup_{i\in \zz_{\ge 0}}
\sup_{p, q\in S_{i}}
|F_{i}(a)(p, q)-F_{i}(b)(p, q)|\\
&=
\sup_{i\in \zz_{\ge 0}}
\sup_{p, q\in S_{i}}
|R_{2^{i}}(a(\tau_{i}(\{p, q\})))-R_{2^{i}}(b(\tau_{i}(\{p, q\})))|. 
\end{align*}
This means that 
\begin{align*}
&\metdis_{\kappa}(H(a), H(b))
=\tag{A1}\label{al:tag:aaa} \\
&\sup_{i\in \zz_{\ge 0}}\sup_{p, q\in S_{i}}
|R_{2^{i}}(a(\tau_{i}(\{p, q\})))-R_{2^{i}}(b(\tau_{i}(\{p, q\})))|.
\end{align*}
For every $i\in \zz_{\ge 0}$, 
from the fact that 
$R_{2^{i}}$ is 
$1$-Lipschitz, 
it follows that 
\begin{align*}
&|R_{2^{i}}(a(\tau_{i}(\{p, q\})))-R_{2^{i}}(b(\tau_{i}(\{p, q\})))|\\
&\le |a(\tau_{i}(\{p, q\}))-b(\tau_{i}(\{p, q\}))|
\le \ouelldis{ a-b }
\end{align*}
for every $\{p, q\}\in [S_{i}]^{2}$. 
According to the inequality \eqref{al:tag:aaa}, 
we conclude that 
\begin{equation*}
\metdis_{\kappa}(H(a), H(b))\le 
\ouelldis{ a-b }. 
\end{equation*}

Next, we will show the opposite inequality. 
Since $a$ and $b$ are bounded functions, 
we can take a sufficiently large number 
$N\in \zz_{\ge 0}$
such that 
$\{a(s),  b(s)\}\yosub [-2^{N}, 2^{N}]$
for all
 $s\in \kappa$. 
Notice that $N$ is depending on $a$ and $b$. 
Then, since $R_{2^{N}}$ is the identity on 
$[-2^{N}, 2^{N}]$, 
for every $\{p, q\}\in[S_{N}]^{2}$, 
due to 
\eqref{al:tag:aaa}, 
we have 
\begin{align*}
&|a(\tau_{N}(\{p, q\}))-b(\tau_{N}(\{p, q\}))|
\\
&=
|R_{2^{N}}(a(\tau_{N}(\{p, q\})))-R_{2^{N}}(b(\tau_{N}(\{p, q\})))|
\le 
\metdis_{\kappa}(H(a), H(b)).
\end{align*}
Since 
$\tau_{N}$
 is surjection, 
we observe that 
\begin{align*}
\ouelldis{ a-b }\le \metdis_{\kappa}(H(a), H(b)). 
\end{align*}
Then  we conclude that 
$\metdis_{\kappa}(H(a), H(b))
=\ouelldis{ a-b}$. 
Thus, the space 
$\ouellsp{\kappa}$
 is a metric subspace of 
 $\met{\kappa}$. 
 Therefore 
Lemma \ref{lem:gfrechet} completes the proof of 
 Statement 
 \ref{item:univ2ellisomk} in 
the theorem. 

Next, we will show that 
$\oumetbd{\kappa}$ is 
universal for 
all bounded
 metric spaces
 of weight 
 $\kappa$. 
Take an arbitrary bounded  metric space
$(X, d)$ of 
weight 
$\kappa$. 
We can regard 
$(X, d)$ as a
bounded  subspace of 
$\ouellsp{\kappa}$
 by 
Lemma \ref{lem:gfrechet}. 
In this setting, 
we can take a sufficiently large 
$N\in \zz_{\ge 0}$ such that 
$f(s)\in  [-2^{N}, 2^{N}]$
for all
$s\in \kappa$ 
and 
$f\in X$
because 
$X$
 is bounded. 
Using the same method explained above, 
we can isometrically embed 
$(X, d)$
 into 
 $\oumetbd{\coprod_{i=0}^{N}S_{i}}$. 
Since 
$\coprod_{i=0}^{N}S_{i}$
 is homeomorphic to 
 the discrete space
$\kappa$, 
we obtain an isometric embedding 
from 
$(X, d)$ 
into 
$\oumetbd{\kappa}$. 
This proves Statement 
\ref{item:univ2bddisomk} 
in 
the theorem. 
Therefore, 
we completes the 
proof of 
Theorem \ref{thm:univ2k}. 
\end{proof}

\begin{rmk}
We can  obtain another 
proof of 
the universality of 
$\met{\kappa}$ for 
  $\oucwei{\kappa}$
  (Statement 
\ref{item:univ2bddisomk}
and the latter part
of Statement 
 \ref{item:univ2ellisomk}
in Theorem 
\ref{thm:univ2k})
as follows:
Take a dense subset
 $H$ 
 of 
 $\ouelltwosp{\kappa}$
 such that 
$\card(H)= \kappa$,
and take a 
surjection 
$f\colon \kappa\to H$.
Since 
$\kappa$ is equipped with  
the  discrete topology in this setting, 
the map 
$f$ is continuous. 
Then, using 
Lemma 
\ref{lem:univsurj}
and 
Proposition 
\ref{prop:denseisom}, 
 we obtain isometric  embeddings 
\[
\met{\ouelltwosp{\kappa}}\hookrightarrow
\met{H}\hookrightarrow
\met{\kappa}. 
\]
From these isometric embeddings
and 
Statement \ref{item:concelltwo} in Theorem \ref{thm:concrete}, 
 we deduce  that 
$\met{\kappa}$ is 
universal
for $\oucwei{\kappa}$. 
\end{rmk}

\begin{proof}[Proof of Corollary \ref{cor:ukappa}]
Statement 
\ref{item:ukappaa}
follows from 
Theorems \ref{thm:main1}
and 
\ref{thm:univ2k}. 
Next we prove 
\ref{item:ukappab}. 
Since 
$Z$ is 
a metrizable space, 
we can take a family $\{S_{i}\}_{i\in\zz_{\ge 0}}$
of closed discrete  subsets of $Z$ such that 
$\bigcup_{i\in \zz_{\ge 0}}S_{i}$
is dense in $Z$. 
Since 
$Z$ has weight $\kappa$, 
we have 
$\kappa=\card(\bigcup_{i\in \zz_{\ge 0}}S_{i})$. 
Due to the assumption that 
$\kappa$ has uncountable cofinality, 
we can find 
$i\in \zz_{\ge 0}$
such that 
$\card(S_{i})=\kappa$. 
Therefore 
Statement 
\ref{item:ukappab}
is deduced from 
\ref{item:ukappaa}. 
\end{proof}

Recall that 
the symbol 
$\yoomega$ 
stands for the 
countably infinite  discrete space. 
\begin{proof}[Proof of Corollary \ref{cor:univ3}]
First, 
Statement 
\ref{item:iniv3cptunc}
follows from 
Statement 
\ref{item:concpolish}
in 
Theorem \ref{thm:concrete}
since compact metrics spaces are Polish. 
Let us verify
 Statement 
\ref{item:iniv3ncpt}. 
Since 
$X$ is non-compact, 
then 
$X$ 
contains 
a 
closed 
subset 
$S$
that is homeomorphic to 
$\yoomega$. 
Then 
Theorem 
\ref{thm:main1} 
implies that 
there exists an
isometric embedding 
$\yomainmap\colon \met{\yoomega}\to \met{X}$
such that 
$\yomainmap(\oumetbd{\yoomega})\yosub\oumetbd{X}$. 
Thus, Theorem 
\ref{thm:univ2k}
for 
$\yoomega$
proves  Statement 
\ref{item:iniv3ncpt}. 
This finishes the proof of 
Corollary 
\ref{cor:univ3}. 
\end{proof}

\section{Case of  compact countable  spaces}\label{sec:countable}
In this section, 
we will observe 
phenomena 
in 
$\met{X}$
when
$X$
is compact and countable. 

An ordinal 
$\alpha$
is said to
 be 
 a
\emph{succesor}
if  there exists 
an ordinal 
$\beta<\alpha$
such that 
$\alpha=\beta+1$. 
In what follows, 
the symbol 
$\yooone$
stands for  the 
first uncountable 
ordinal. 
We begin with 
the 
classification of 
countable compact spaces. 

\begin{lem}\label{lem:cthomeo}
If 
$X$ 
is
a compact 
metrizable 
space such that 
$\card(X)\le \aleph_{0}$, 
then 
$X$
is homeomorphic to 
a countable successor 
ordinal $\alpha< \yooone$
equipped with the  order topology. 
\end{lem}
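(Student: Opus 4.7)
The plan is to use the Cantor--Bendixson analysis of $X$ together with the classical Mazurkiewicz--Sierpi\'nski classification of countable compact metric spaces. Define the iterated derived sets by $X^{(0)}=X$, $X^{(\alpha+1)}=(X^{(\alpha)})'$ (the set of non-isolated points of $X^{(\alpha)}$), and $X^{(\lambda)}=\bigcap_{\alpha<\lambda}X^{(\alpha)}$ at a limit ordinal $\lambda$. Each $X^{(\alpha)}$ is closed in $X$, hence compact, and the family $\{X^{(\alpha)}\}_{\alpha}$ is weakly decreasing.

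First I would establish that $X^{(\gamma)}=\emptyset$ for some countable ordinal $\gamma<\yooone$. Since $X$ is compact metrizable, it is second countable; fix a countable base $\{U_{n}\}_{n\in\yoomega}$. Suppose toward contradiction that $X^{(\alpha)}\supsetneq X^{(\alpha+1)}$ for every $\alpha<\yooone$. For each such $\alpha$, pick $x_{\alpha}\in X^{(\alpha)}\setminus X^{(\alpha+1)}$, which is isolated in $X^{(\alpha)}$, and choose $n(\alpha)\in\yoomega$ with $U_{n(\alpha)}\cap X^{(\alpha)}=\{x_{\alpha}\}$. By pigeonhole some $n$ satisfies $n(\alpha)=n$ for uncountably many $\alpha$; take two such indices $\alpha<\beta$. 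Since $x_{\alpha}\notin X^{(\alpha+1)}$ we have $U_{n}\cap X^{(\alpha+1)}=\emptyset$, whereas $\{x_{\beta}\}=U_{n}\cap X^{(\beta)}\subseteq U_{n}\cap X^{(\alpha+1)}$, a contradiction. Hence the derivative sequence stabilizes at some $\gamma<\yooone$, and $X^{(\gamma)}$ is a perfect closed subset of $X$. A nonempty perfect subset of a Polish space contains a Cantor set and so is uncountable, contradicting $\card(X)\le\aleph_{0}$; therefore $X^{(\gamma)}=\emptyset$.

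Next I would verify that $\gamma$ is a successor ordinal. If $\gamma$ were a limit, the family $\{X^{(\alpha)}\}_{\alpha<\gamma}$ would consist of nonempty compact sets arranged in a decreasing chain, so by the finite intersection property $X^{(\gamma)}=\bigcap_{\alpha<\gamma}X^{(\alpha)}$ would be nonempty, contradicting the previous step. Write $\gamma=\beta+1$. Then $X^{(\beta)}$ is nonempty and every point of $X^{(\beta)}$ is isolated in $X^{(\beta)}$, making it a compact discrete subspace, hence finite of some cardinality $n\ge 1$.

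The remaining step, which I expect to be the main obstacle, is to identify $X$ up to homeomorphism with a specific countable successor ordinal. The Mazurkiewicz--Sierpi\'nski theorem asserts that compact countable metric spaces with the same Cantor--Bendixson invariants $(\beta,n)$ are mutually homeomorphic, and that $X$ is realized by the ordinal $n$ when $\beta=0$ and by $\yoomega^{\beta}\cdot n+1$ when $\beta\ge 1$; in either case this is a countable successor ordinal below $\yooone$. The construction proceeds by transfinite induction on $\beta$: partition $X$ into $n$ clopen pieces, each containing exactly one point of $X^{(\beta)}$, recognize each piece inductively as a copy of $\yoomega^{\beta}+1$ by decomposing a clopen neighborhood base of its unique rank-$\beta$ point into lower-rank ordinal pieces, and concatenate the resulting homeomorphisms along the order structure. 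Alternatively one cites this classical theorem directly from standard references on descriptive set theory.
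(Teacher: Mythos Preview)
Your proposal is correct and follows exactly the classical Mazurkiewicz--Sierpi\'nski route; the paper itself gives no independent argument here but simply cites \cite{mazurkiewicz1920contribution} and \cite{milliet2011remark}. Your Cantor--Bendixson analysis and the invocation of the $(\beta,n)$ classification are precisely what those references contain, so you have in effect supplied the details the paper omits.
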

\begin{proof}
See \cite{mazurkiewicz1920contribution}, and \cite[Theorem 4]{milliet2011remark}. 
\end{proof}

For the proof of 
Theorem \ref{thm:nonuniv}, 
we prepare several  statements on 
isometric embeddings between 
Banach spaces. 
For a 
topological space 
$X$, 
we denote by 
$\oucfuncsp{X}$
the set of all 
real-valued 
continuous
bounded 
 functions on 
$X$. 
We also denote by 
$\oucfuncdis{*}$
the sup-metric on 
$\oucfuncsp{X}$. 

\begin{thm}\label{thm:noemb}
For a countable successor ordinal 
$\alpha<\yooone$, 
there exists 
a sufficiently large 
countable successor ordinal 
$\theta<\yooone$
such that 
there is no 
isometric embedding 
from $\oucfuncsp{\theta}$
into 
$\oucfuncsp{\alpha}$, 
where
 we regard 
$\theta$ and 
$\alpha$ as ordered topological spaces. 
\end{thm}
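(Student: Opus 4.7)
The plan is to invoke the Szlenk index, an ordinal-valued invariant of separable Banach spaces, together with Figiel's theorem on nonlinear isometric embeddings, in order to reduce the metric statement to a linear Banach-space statement. Suppose, toward a contradiction, that an isometric embedding $f\colon \oucfuncsp{\theta}\to \oucfuncsp{\alpha}$ exists; translating in $\oucfuncsp{\alpha}$, I may assume $f(0)=0$. Figiel's theorem (1968) then produces a closed linear subspace $F\yosub \oucfuncsp{\alpha}$, namely the closed linear span of $f(\oucfuncsp{\theta})$, together with a norm-one linear operator $\pi\colon F\to \oucfuncsp{\theta}$ satisfying $\pi\circ f=\mathrm{id}$; in particular $\pi$ is surjective, so $\oucfuncsp{\theta}$ is realized as a linear quotient of a closed subspace of $\oucfuncsp{\alpha}$.

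Next I would combine this with the standard monotonicity of the Szlenk index $\mathrm{Sz}$ under closed linear subspaces and linear quotients (both follow directly from the defining weak${}^{*}$-derivation of the dual unit ball), which yields
\[
\mathrm{Sz}(\oucfuncsp{\theta})\le \mathrm{Sz}(F)\le \mathrm{Sz}(\oucfuncsp{\alpha}).
\]
Since $\oucfuncsp{\alpha}$ is a separable Banach space, $\mathrm{Sz}(\oucfuncsp{\alpha})$ is a fixed countable ordinal.

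The conclusion then follows from Samuel's classical computation: for each $\gamma<\yooone$, the Szlenk index of $\oucfuncsp{[0,\omega^{\omega^{\gamma}}]}$ equals $\omega^{\gamma+1}$, so these values are cofinal in $\yooone$. Choosing $\gamma<\yooone$ with $\omega^{\gamma+1}>\mathrm{Sz}(\oucfuncsp{\alpha})$ and setting $\theta:=\omega^{\omega^{\gamma}}+1$, which is a countable successor ordinal whose order-topological realization is $[0,\omega^{\omega^{\gamma}}]$, one obtains $\mathrm{Sz}(\oucfuncsp{\theta})=\omega^{\gamma+1}>\mathrm{Sz}(\oucfuncsp{\alpha})$, contradicting the displayed inequality.

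The main obstacle is the precise bookkeeping of the two Banach-space ingredients: Figiel's reduction of a (possibly nonlinear) metric isometric embedding between Banach spaces to a linear quotient situation, and Samuel's computation of the Szlenk index of $\oucfuncsp{[0,\omega^{\omega^{\gamma}}]}$. Both are classical but nontrivial; an alternative route would be to cite results of Godefroy--Kalton--Lancien type that transfer the Szlenk index directly across nonlinear isometric embeddings, bypassing Figiel entirely.
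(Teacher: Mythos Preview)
Your argument is correct. Note that the paper itself does not supply a proof; it simply refers to \cite[Lemma~5]{MR1609675}, and the subsequent remark indicates that the cited result is phrased directly in terms of Cantor--Bendixson derivatives of the underlying compacta: if $X^{(\beta)}=\emptyset$ but $Y^{(\beta+1)}\neq\emptyset$, then $\oucfuncsp{Y}$ admits no isometric embedding into $\oucfuncsp{X}$. One then chooses $\theta$ of high enough Cantor--Bendixson rank relative to $\alpha$.

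Your route is genuinely different in packaging. You linearise via Figiel to obtain a subspace--quotient configuration, then apply monotonicity of the Szlenk index together with Samuel's computation $\mathrm{Sz}(\oucfuncsp{[0,\omega^{\omega^{\gamma}}]})=\omega^{\gamma+1}$ to reach a contradiction. The two approaches are of course linked under the hood---Samuel's theorem is exactly the bridge from the Cantor--Bendixson rank of a scattered compact $K$ to the Szlenk index of $\oucfuncsp{K}$---but the cited lemma operates directly at the level of the compact spaces and yields a finer ordinal threshold (one step in the Cantor--Bendixson rank rather than a jump in its $\omega$-power). In exchange, your Szlenk-index argument is more robust: once Figiel has produced the linear quotient, the obstruction is isomorphic rather than isometric, and the Godefroy--Kalton--Lancien variant you mention would strengthen the conclusion to ``no Lipschitz embedding,'' which the purely isometric Cantor--Bendixson criterion does not immediately give.
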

\begin{proof}
See 
\cite[Lemma 5]{MR1609675}. 
\end{proof}
\begin{rmk}
For a
topological space
$T$, 
and a countable 
ordinal 
$\alpha<\yooone$, 
we denote by 
$T^{(\alpha)}$
the $\alpha$-th 
Cantor--Bendixson 
derivative of 
$T$. 
The paper  \cite[Lemma 5]{MR1609675}
indicates that, 
in general, 
if a  countable compact metrizable space 
$X$
and a countable ordinal 
$\alpha$ satisfy 
that 
$X^{(\alpha)}=\emptyset$, 
and if 
a compact metrizable space 
 $Y$ 
 satisfies 
 that 
 $Y^{(\alpha+1)}\neq\emptyset$, 
 then 
 the space 
 $\oucfuncsp{Y}$
 can not be isometrically 
 embedded into 
 $\oucfuncsp{X}$. 
\end{rmk}

\begin{thm}\label{thm:soulofsp}
Let 
$R$
be a compact metrizable  space 
and
fix a metric 
$w\in \met{R}$
with 
the diameter 
$\yodiam_{w}(R)\le 1$, 
and let 
$K$ denote  the 
set of all 
$f\in \oucfuncsp{R}$ such that 
$\oucfuncdis{f}\le 1$
and 
$f$ is $1$-Lipschitz 
with respect to $w$.
Then $K$ is 
a
compact subset of 
$\oucfuncsp{R}$, 
and
for every Banach space 
$B$, 
 the following conditions are 
equivalent to 
each other. 
\begin{enumerate}[label=\textup{(\arabic*)}]
\item\label{item:kisom}
 the space
  $B$
contains 
an isometric copy of 
$K$
 equipped with the restricted metric induced by 
the supremum 
norm on 
$\oucfuncsp{R}$;
\item\label{item:linisom}
there exists a 
linear map 
$T\colon \oucfuncsp{R}\to B$
such that 
$T$ is an isometric embedding; 
\item\label{item:isom}
the space 
$\oucfuncsp{R}$
can be  isometrically embedded into 
$B$. 
\end{enumerate}
\end{thm}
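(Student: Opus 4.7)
The plan is to establish compactness of $K$ first, dispatch the two easy implications $\ref{item:linisom}\Rightarrow\ref{item:isom}\Rightarrow\ref{item:kisom}$, and then concentrate effort on the nontrivial implication $\ref{item:kisom}\Rightarrow\ref{item:linisom}$, which constitutes the heart of the theorem.

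Compactness of $K$ follows from the Arzel\`a--Ascoli theorem inside $\oucfuncsp{R}$: the elements of $K$ are uniformly bounded by $1$, and they form an equicontinuous family by the uniform $1$-Lipschitz condition against $w$. Moreover $K$ is closed under uniform convergence because both the pointwise bound $|f|\le 1$ and the Lipschitz constant bound survive uniform limits. For the equivalences, $\ref{item:linisom}\Rightarrow\ref{item:isom}$ is trivial (a linear isometric embedding is in particular isometric), and $\ref{item:isom}\Rightarrow\ref{item:kisom}$ is immediate by restricting the ambient isometric embedding $\oucfuncsp{R}\to B$ to the subset $K\subseteq\oucfuncsp{R}$.

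The main implication $\ref{item:kisom}\Rightarrow\ref{item:linisom}$ is attacked as follows. Given an isometric embedding $\iota\colon K\to B$, translate by $-\iota(0)$ so that $\iota(0)=0$. The set $K$ is convex, balanced, and closed under the lattice operations $\max,\min$; its positive cone $\bigcup_{\lambda\ge 0}\lambda K$ equals the set of all bounded $w$-Lipschitz real-valued functions on $R$, and this set is dense in $\oucfuncsp{R}$ since $R$ is compact (for example, by Stone--Weierstrass, or by uniform approximation by Lipschitz functions on compact metric spaces). The strategy is to extend $\iota$ first positively homogeneously to this dense cone by the formula $T(\lambda g)=\lambda\,\iota(g)$ for $g\in K$ and $\lambda\ge 0$, then to verify additivity, and finally to extend continuously to $\oucfuncsp{R}$ by density.

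The main obstacle is the linearization step: since $B$ is not assumed to be strictly convex, isometric embeddings of convex sets need not be affine, so the identities $\iota(\mu g)=\mu\iota(g)$ for $\mu\in[0,1]$ and $\iota((f+g)/2)=(\iota(f)+\iota(g))/2$ do not follow automatically. The plan is to combine a Figiel-type argument, producing a norm-one linear left inverse of $\iota$ defined on the closed linear span of $\iota(K)$ and hence an isometry on this span, with the convex-lattice rigidity of $K$ itself: the symmetric pairs $f,-f\in K$ forced through $0$, together with the presence of enough saturated $1$-Lipschitz functions in $K$ to probe the norm of $\oucfuncsp{R}$ from all directions, upgrade $\iota$ to an affine map on $K$. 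Affinity combined with positive homogeneity then yields linearity on the cone $\bigcup_{\lambda\ge 0}\lambda K$, and the resulting densely defined linear isometry extends uniquely by continuity to a linear isometric embedding $T\colon\oucfuncsp{R}\to B$, establishing \ref{item:linisom}.
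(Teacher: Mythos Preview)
Your treatment of compactness via Arzel\`a--Ascoli and of the easy implications $\ref{item:linisom}\Rightarrow\ref{item:isom}\Rightarrow\ref{item:kisom}$ matches the paper. For the remaining implications the paper does not give a self-contained argument: it simply invokes a theorem of Dutrieux and Lancien for the equivalence $\ref{item:kisom}\Leftrightarrow\ref{item:linisom}$, and separately the Godefroy--Kalton linearization theorem (together with the separability of $\oucfuncsp{R}$) for $\ref{item:isom}\Rightarrow\ref{item:linisom}$.

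Your direct attack on $\ref{item:kisom}\Rightarrow\ref{item:linisom}$ has a genuine gap. Figiel's theorem concerns isometric embeddings of one \emph{Banach space} into another; it is not clear how you intend to apply it starting only from $\iota\colon K\to B$ with $K$ a compact convex set rather than a linear space. More seriously, even in the Banach-space setting the norm-one linear left inverse that Figiel produces is merely a quotient map, not an isometry on the closed linear span of the image; your parenthetical ``and hence an isometry on this span'' is false in general (for instance, $t\mapsto(t,\sin t)$ is a nonlinear isometric embedding of $\rr$ into $(\rr^{2},\ouelldis{*})$, and the Figiel retraction $(x,y)\mapsto x$ is not isometric on $\rr^{2}$). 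The passage from the mere existence of $\iota$ to a \emph{linear} isometric embedding of $\oucfuncsp{R}$ is exactly the nontrivial content of the Dutrieux--Lancien result and requires a real argument exploiting the extremal structure of the unit ball of $\oucfuncsp{R}$; your phrases ``convex-lattice rigidity'' and ``saturated $1$-Lipschitz functions probing the norm'' gesture at the right ingredients but do not supply the mechanism. As written, the linearization step---and hence the implication $\ref{item:kisom}\Rightarrow\ref{item:linisom}$---is incomplete.
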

\begin{proof}
Using the Arzera--Ascoli Theorem, 
we can observe that 
$K$  
is compact. 
The equivalence between 
\ref{item:kisom} and \ref{item:linisom} follows from 
\cite[Theorem 3.1]{MR2419968}. 
The implication 
``$\ref{item:isom}\To \ref{item:linisom}$''
can be deduced from 
\cite[Corollary 3.3]{MR2030906}
and the fact that 
$\oucfuncsp{R}$
 is separable by the compactness of 
 $R$. 
\end{proof}

We shall  show our remaining main results in this paper.
\begin{proof}[Proof of Theorem \ref{thm:nonuniv}]
Let 
$X$
be a compact 
and countable 
metrizable space. 
Since 
$X^{2}$ 
is
compact
and countable, 
using 
Lemma 
\ref{lem:cthomeo}, 
we can find a 
(countable) successor 
ordinal $\alpha$
homeomorphic to 
$X^{2}$
with respect to 
the order topology on 
$\alpha$. 
By 
Theorem 
\ref{thm:noemb}, 
we can
also  find a sufficiently large 
countable 
successor 
ordinal 
$\theta$
equipped with 
the order topology
 such that 
there does not exists 
an isometric embedding 
from 
$\oucfuncsp{\theta}$
into 
$\oucfuncsp{X^{2}}
(=\oucfuncsp{\alpha})$. 
Take a 
compact subset 
$K$
of 
$\oucfuncsp{\theta}$
mentioned in 
Theorem 
\ref{thm:soulofsp}. 
Then 
Theorem 
\ref{thm:soulofsp} implies that 
the space 
$K$
 can not be isometrically embedded into 
$\oucfuncsp{X^{2}}$. 
Since 
$\met{X}$
 is a metric subspace of 
$\oucfuncsp{X^{2}}$, 
there is no 
isometric embedding 
$K\to \met{X}$. 
Therefore, we 
have constructed a 
compact 
metric 
space that 
can not be isometrically embedded into 
$\met{X}$. 
This finishes the 
proof of 
Theorem 
\ref{thm:nonuniv}. 
\end{proof}

\begin{proof}[Proof of Theorem \ref{thm:countuniv}]
The proof is similar to 
that of 
Theorem \ref{thm:univ2k}. 
Take a compact subset 
$K$
of 
$\oucosp{\yoomega}$. 
Under the assumption, 
$X$ 
is
a compact  and countably infinite 
metrizable space, 
and 
hence
$X$
contains an 
isometric copy of 
$\yoomega+1$. 
Thus
Theorem
\ref{thm:extensionsugoi}
or 
 \ref{thm:main1}
implies that 
$\met{X}$
contains an 
isometric copy 
of 
$\met{\yoomega+1}$. 
It remains to  verify that 
$K$ can be isometrically embedded into 
$\met{\yoomega+1}$, 
where we regard 
$\yoomega+1$ as an ordered topological space, 
thus it is homeomorphic to 
the one-point compactification of 
the 
countably infinite  discrete space. 
We define 
the function
$L\colon  \yoomega\to \rr$
 by 
$L(s)=\inf_{f\in K}f(s)$. 
Since 
$K$ 
is compact, 
for each 
$\epsilon\in (0, \infty)$, 
we can take a finite 
$\epsilon$-net 
$T_{\epsilon}$
of 
$K$. 
We also define 
$h_{\epsilon}\colon \yoomega\to \rr$
by 
$h_{\epsilon}(s)=\min_{f\in T_{\epsilon}}f(s)$. 
Since 
$T_{\epsilon}$
is
a finite
$\epsilon$-net, 
the map  
$h_{\epsilon}$
belongs to 
$\oucosp{\yoomega}$
and 
$\lVert h_{\epsilon}-L\rVert\le \epsilon$. 
Then 
we conclude that 
$L\in \oucosp{\yoomega}$. 
In this setting, 
the map 
$l\in \oucosp{\yoomega}$
defined by 
$l(s)=|L(s)|+2^{-s}$ 
also belongs to 
$\oucosp{\yoomega}$. 
We may assume that 
 every 
$f\in K$ satisfies that 
$f(s)>0$
for all 
$s\in \yoomega$
by replacing 
$K$ with 
$K+l=\{\, f+l\mid f\in K\, \}$
if necessary. 
Put 
$U(s)=\sup_{f\in K}f(s)$. 
Since 
$K$ is 
compact, 
we see that 
$U\in \oucosp{\yoomega}$
and 
$U(s)>0$
by the similar method as $L$. 
Put $S_{k}=\{2k, 2k+1\}$. 
We then define 
$G\colon K\to \met{\yoomega+1}$
by 
\[
G(f)(p, q)=
\begin{cases}
0 & \text{if $p=q$;}\\
f(k) & \text{if $p\neq q$ and $p, q\in S_{k}$;}\\
\max\{U(k), U(l)\} & \text{if $p\in S_{k}$, $q\in S_{l}$,and $k\neq l$;}\\
U(k)& \text{if $p=\yoomega$, and $q\in S_{k}$;}\\
U(k)& \text{if $p\in S_{k}$, and $q=\yoomega$.}
\end{cases}
\]
Under this definition, 
we can see that  
each 
$G(f)$  actually 
belongs to 
$\met{\yoomega+1}$
(compare with \cite[Definition 3.3]{Ishiki2019}). 
Then, 
by the same method to 
the proof of 
Theorem \ref{thm:univ2k}, 
we can observe that 
$G$ is 
an isometric embedding 
from 
$K$
into 
$\met{\yoomega+1}$. 
As a consequence, 
the space
$\met{X}$
also contains 
an isometric copy of 
$K$. 
This finishes the proof. 
\end{proof}

\begin{proof}[Proof of Corollary  \ref{cor:bilip}]
Corollary  \ref{cor:bilip}
follows from 
\cite[Theorem]{MR0511661}. 
\end{proof}

\begin{rmk}
Using 
Theorems 
\ref{thm:soulofsp}
and 
\ref{thm:nonuniv}, 
we see that 
the space of all 
$1$-Lipschitz functions 
$f\colon [0, 1]\to [-1, 1]$
can not be isometrically embedded into 
$\oucfuncsp{\yoomega+1}$
because 
the space
$\oucfuncsp{[0, 1]}$
 is universal for 
 all separable metric spaces
(the Banach--Mazur theorem). 
\end{rmk}

\section{Questions}\label{sec:ques}

As stated in 
Theorem \ref{thm:self0}, 
if 
$X$ 
has abundant subset, 
then 
$\met{X}$
contains metric spaces 
homeomorphic to 
$X$. 
It is interesting 
what happens 
in the case where 
$X$ is finite. 
\begin{ques}
For every 
finite metric space 
$(X, d)$, 
does  there exist
 an 
isometric embedding 
from 
$(X, d)$
into 
$\met{X}$?
\end{ques}

Our results 
states that 
the universality of 
$\oumetbd{X}$ for 
only 
bounded metric spaces. 
Does it have the universality for unbounded spaces?
\begin{ques}
Let 
$X$
be an infinite 
 compact metrizable space. 
 Then, 
is 
$(\oumetbd{X}, \metdis_{X})$
universality for unbounded metric spaces?
\end{ques}

The next question 
asks 
whether $\met{X}$
contains 
an isometric  copy of the Urysohn universal spaces or not. 
\begin{ques}
Let 
$X$ 
be a metrizable space. 
Does  
$(\met{X}, \metdis_{X})$  satisfy the 
finite injectivity?
Namely, 
does  
$(\met{X}, \metdis_{X})$  satisfy 
that 
for every finite metric space 
$(A, m)$, 
 for every subset 
$B$ of $A$, 
and for every isometric embedding 
$f\colon B\to \met{X}$, 
there exists an isometric embedding 
$F\colon A\to \met{X}$ such that 
$F|_{B}=f$?
\end{ques}




\bibliographystyle{myplaindoidoi}
\bibliography{../../../bibtexmet/bibmet.bib}



\end{document}